\documentclass[12pt]{article}

\usepackage[margin=1in]{geometry}
\usepackage{amsmath,amssymb,amsfonts,amsthm,mathrsfs,bbm}
\usepackage{indentfirst}
\usepackage{graphicx}

\usepackage{array}          %

\usepackage{xcolor}
\definecolor{lightblue}{rgb}{0,0.2,0.55}
\usepackage[colorlinks=true,linkcolor=lightblue,
            citecolor=lightblue,urlcolor=lightblue]{hyperref}
\usepackage{yfonts}
\usepackage[title]{appendix}
\usepackage{scalerel}
\usepackage{pgfplots}
\pgfplotsset{width=10cm,compat=1.9}
\usepackage{empheq}
\usepackage{float}
\usepackage{algorithm}
\usepackage{algpseudocode}

\usepackage[normalem]{ulem}

\usepackage{subcaption} 

\usepackage{enumerate} 

\usetikzlibrary{shapes.geometric,arrows.meta,positioning,calc} %

\makeatletter
\newcommand*\rel@kern[1]{\kern#1\dimexpr\macc@kerna}
\newcommand*\widebar[1]{%
  \begingroup
  \def\mathaccent##1##2{%
    \rel@kern{0.8}%
    \overline{\rel@kern{-0.8}\macc@nucleus\rel@kern{0.2}}%
    \rel@kern{-0.2}%
  }%
  \macc@depth\@ne
  \let\math@bgroup\@empty \let\math@egroup\macc@set@skewchar
  \mathsurround\z@ \frozen@everymath{\mathgroup\macc@group\relax}%
  \macc@set@skewchar\relax
  \let\mathaccentV\macc@nested@a
  \macc@nested@a\relax111{#1}%
  \endgroup
}
\makeatother

\makeatletter
\DeclareRobustCommand\widecheck[1]{{\mathpalette\@widecheck{#1}}}
\def\@widecheck#1#2{%
    \setbox\z@\hbox{\m@th$#1#2$}%
    \setbox\tw@\hbox{\m@th$#1%
       \widehat{%
          \vrule\@width\z@\@height\ht\z@
          \vrule\@height\z@\@width\wd\z@}$}%
    \dp\tw@-\ht\z@
    \@tempdima\ht\z@ \advance\@tempdima2\ht\tw@ \divide\@tempdima\thr@@
    \setbox\tw@\hbox{%
       \raise\@tempdima\hbox{\scalebox{1}[-1]{\lower\@tempdima\box
\tw@}}}%
    {\ooalign{\box\tw@ \cr \box\z@}}}
\makeatother

\allowdisplaybreaks           %

\renewcommand{\Im}{i} 
\newcommand{\real}{\mathbb{R}}
\newcommand{\R}{\mathbb{R}}
\newcommand{\C}{\mathbb{C}}
\newcommand{\N}{\mathbb{N}}
\newcommand{\E}{\mathbb{E}}
\newcommand{\PP}{\mathbb{P}}
\DeclareMathOperator*{\esssup}{ess\,sup}

\theoremstyle{plain}
\newtheorem{theorem}{Theorem}[section]
\newtheorem{proposition}[theorem]{Proposition}

\newtheorem{lemma}[theorem]{Lemma}

\theoremstyle{definition}
\newtheorem{definition}[theorem]{Definition}
\newtheorem{assumption}{Assumption}

\theoremstyle{remark}

\numberwithin{equation}{section}

\title{\huge Probabilistic representation and classical solutions of
   wave equations with complex polynomial nonlinearities }

\author{
  Joshua J.~Y.~Chan\thanks{School of Physical and Mathematical Sciences, Nanyang Technological University, Singapore. \texttt{joshuaju001@e.ntu.edu.sg}}
  \and
  Nicolas Privault\thanks{School of Physical and Mathematical Sciences, Nanyang Technological University, Singapore. \texttt{nprivault@ntu.edu.sg}}
}
\date{\today}

\begin{document}
\maketitle

\vspace{-0.6cm}

\begin{abstract}
 We review the probabilistic representation of
 solutions of wave equations with polynomial nonlinearities
 in spatial dimensions $d=1,2,3$ using stochastic branching processes.
 Under regularity assumptions on the initial data, we derive
 conditions ensuring
 the integrability of the corresponding Monte Carlo estimator,
 and the existence and smoothness of mild and classical
 solutions.
 We also present numerical results and
 comparisons with grid-based algorithms
 for the solution of nonlinear wave equations. 
\end{abstract}

\noindent\textbf{Keywords:}
Wave equations;
Nonlinear PDEs;
Duhamel formula; 
Branching processes;
Monte Carlo method.\\
\noindent\textbf{MSC 2020:}
35L05; %
35L70; %
60J80; %
65C05. %

\baselineskip=0.72cm

\section{Introduction}
\noindent 
The telegraph equation with dissipation
\begin{empheq}[left=\empheqlbrace]{align} 
  & \partial_{tt}u(x,t)+ 2 \lambda \partial_t u(x,t) 
  - \partial_{xx}u(x,t)
  = 0, 
  & (x,t)\in \real\times(0,\infty), \nonumber %
  \\
  \nonumber
  & u(x,0) = \phi(x),  & x\in\real,                             %
  \\
\nonumber %
  & \partial_t u(x,0) = 0, & x\in\real,                
\end{empheq}
 introduced in \cite{goldstein1951random} and
\cite{kac1974stochastic}, models wave propagation with finite velocity
in various physical contexts, including
electromagnetic wave propagation in transmission lines,
neutron transport, and biological systems.
Its solution admits the probabilistic representation 
$$u(x,t)=
\frac{1}{2} \E \left[\phi \left(
  x + \int_0^t (-1)^{N_s} ds \right) \right]
+
\frac{1}{2} \E \left[\phi \left(
  x + \int_0^t (-1)^{N_s} ds \right) \right]
,
$$
 where $(N_s)_{s\geq 0}$ is a standard
 Poisson process with rate $\lambda$ and 
 $\int_0^t (-1)^{N_s} ds$ is the Goldstein--Kac telegraph process
 which describes the random motion of a particle moving
 on the real line at constant speed, 
 changing direction at the arrival times of a Poisson process
 with rate $\lambda>0$, see also Chapter~12 of \cite{cartier}. 
 
\medskip

 In \cite{dalang},
 a probabilistic representation for the
 solution of the semilinear wave equation 
\begin{empheq}[left=\empheqlbrace]{align} 
 & \partial_{tt}u(x,t)-\Delta u(x,t) = V(x,t)u(x,t)
  & (x,t)\in\real^d \times(0,\infty),
\nonumber %
  \\
  \nonumber
  & u(x,0) = \phi(x),  & x\in\real^d,
  \\
\nonumber %
  & \partial_t u(x,0) = \psi(x), & x\in\real^d,                
\end{empheq}
 was constructed
 using a Poisson-driven continuous-time Markov process.
 In the case where the nonlinearity $f(u(x,t))$ is given
 by a convergent power series,
 nonlinear wave equations of the form
\begin{empheq}[left=\empheqlbrace]{align} 
 & \partial_{tt}u(x,t)-\Delta u(x,t) = f(u(x,t))
  & (x,t)\in\real^d \times(0,\infty), \label{eq:WaveMain2}
  \\
  \nonumber
  & u(x,0) = \phi(x),  & x\in\real^d,
  \\
\nonumber %
  & \partial_t u(x,0) = \psi(x), & x\in\real^d,                
\end{empheq}
 were treated in \cite{bakhtin} using
 the stochastic cascade method \cite{sznitman,waymire,ramirez}. %

 \medskip

 On the other hand, stochastic branching methods 
 \cite{skorohodbranching,inw,hpmckean}
 have been applied to the representation of solutions
 of parabolic and elliptic partial differential
 equations in e.g.
 \cite{hpmckean},
 \cite{chakraborty}, 
 \cite{laborderespa},
 \cite{penent2022fully}.

 \medskip
 
 Existence of mild solutions of nonlinear wave equations
 of the form \eqref{eq:WaveMain2}
 with general power series nonlinearities
 has been derived in \cite{labordere2} 
 by the stochastic branching method, 
 together with their probabilistic representation,
 using inverse Fourier representations
 of Green functions, under integrability conditions
 based on ODE arguments. 
 Lifespan estimates
  for the existence times
  of solutions to wave equations
  have been obtained by analytic methods in e.g.
  \cite{AgemiTakamura1992},
  \cite{zhouyi1992}, 
  \cite{zhouyi1993}, 
  \cite{TakamuraWakasa2011}.

 \medskip
   
 In this paper, we consider the application of
 the stochastic branching method to 
 nonlinear wave equations of the form 
\begin{empheq}[left=\empheqlbrace]{align} 
 & \partial_{tt}u(z,t)-c^2\Delta u(z,t) = f (u(z,t)),
  & (z,t)\in\C^d\times(0,\infty), \label{eq:WaveMain}
  \\
  \nonumber
  & u(z,0) = \phi(z),  & z\in\C^d,                             %
  \\
  \label{eq:IC2}
  & \partial_t u(z,0) = \psi(z), & z\in\C^d,                
\end{empheq}
in dimensions $d = 1,2,3$, where
\(c\in\C\setminus\{0\}\) and $f$ is a complex polynomial
nonlinearity of the form 
\begin{equation}\label{eq:PolyNonlin}
  f(u)=\sum_{k=0}^N a_ku^k, %
\end{equation}
 where
 $(a_k)_{0\leq k \leq N} \subseteq \C^{N+1}$, $N\ge1$,
 $a_{N}\neq0$ and $\Delta$ is the Laplacian 
$$
 \Delta=\sum_{k=1}^d\partial_{z_{k}}\partial_{z_{k}}.
$$ 
 In this context, our aim is two-fold.
 \begin{enumerate}[1)]
 \item
   We revisit the application of the stochastic branching method
   to wave equations 
   with polynomial nonlinearities %
   using the classical integral kernel
   expressions of~\cite{Eva10}.
\item %
  In addition to mild solutions,
  we obtain sufficient conditions for the existence of classical solutions
  using branching process domination arguments. 
 \end{enumerate}
\noindent
 Given $(z_0,t_0)\in \C^d\times(0,\infty)$, consider
 the space-time cone 
 \begin{equation}
   \label{space-time-cone}
 \widebar{\Gamma}_{z_0,t_0}(\C^d) 
 :=\bigl\{z\in \C^d:\exists y\in\R^d\text{ such that }z=z_0+cy 
 \text{ and } |y|\le t_0
 \bigr\}, 
\end{equation}
 where $c$ is given in \eqref{eq:WaveMain}, 
 and we let 
 ${\cal C} \bigl(
   \widebar{\Gamma}_{z_0,t_0}(\C^d)
 \bigr)$ and 
${\cal C}^1 \bigl(
  \widebar{\Gamma}_{z_0,t_0}(\C^d)
  \bigr)$ 
  respectively denote
  the sets of continuous and continuously
  differentiable functions on
  $\widebar{\Gamma}_{z_0,t_0}(\C^d)$.

    \medskip
     
    In Theorems~\ref{thm:Mild} and \ref{thm:mild-existence}
 we obtain the existence of {mild solutions}
 of \eqref{eq:WaveMain}-\eqref{eq:PolyNonlin}
 on a certain time interval $[0,t_0]$
 by assuming that
 $\phi, \psi$
 are bounded measurable functions on
 $\widebar{\Gamma}_{z_0,t_0}(\C^d)$ 
 and 
  $\phi\in {\cal C}^1 \bigl(
  \widebar{\Gamma}_{z_0,t_0}(\C^d)
  \bigr)$ if $d=2,3$. 
\noindent 
 Then, in Theorem~\ref{thm:classical} we show that 
 \eqref{eq:WaveMain}-\eqref{eq:PolyNonlin}
 admits a {classical solution} on $[0,t_0]$ 
 by assuming in addition that
 $\psi\in {\cal C}^2 \bigl( \widebar{\Gamma}_{z_0,t_0}(\C^d) \bigr)$ and 
\begin{enumerate}[i)] %
\item $\phi \in {\cal C}^2 \bigl(
  \widebar{\Gamma}_{z_0,t_0}(\C^d)
  \bigr)$ for $d=1$, or  
\item $\phi\in {\cal C}^3 \bigl(
  \widebar{\Gamma}_{z_0,t_0}(\C^d)
  \bigr)$ for $d=2,3$. 
\end{enumerate}
 Our approach relies on 
 a direct use of the classical d'Alembert, Poisson and Kirchhoff kernels
 in dimensions $d=1,2,3$.
 This cone-based construction yields a branching process 
 whose spatial marks are localized on admissible cones.
 It provides local stochastic representations
 of mild solutions and, under additional regularity assumptions,
 of classical solutions.
 This complements the approach of \cite{labordere2}
 which relies on a general Fourier-based framework for constant-coefficient Cauchy
 problems, in which Green functions are constructed through inverse Fourier
 transforms, and integrability is controlled by an auxiliary ODE. 

 \medskip

 Section~\ref{s7} presents numerical experiments
 with comparisons to other numerical methods. 
   Our experiments show in particular that, 
   although the branching algorithm is explosive in nature,
   it can outperform standard (non optimized or customized)
   grid-based codes in terms of
   stability beyond a given time horizon, 
   including for an ill-posed focusing equation. 

   \medskip
    
We proceed as follows.
Section~\ref{s2} gathers preliminaries on kernels and the weak and
mild formulations of \eqref{eq:WaveMain}-\eqref{eq:IC2}. 
Section~\ref{s3} presents the random trees
used for probabilistic representations.
In Section~\ref{s4}, 
we provide sufficient conditions for the
existence and probabilistic representation of mild solutions,
under integrability conditions on the weights used in
stochastic representations.
 Section~\ref{s5} derives conditions on equation coefficients under 
which this integrability holds. 
 In Section~\ref{s6}, we provide
conditions for the existence of classical solutions. 
Numerical simulations and comparisons with grid-based
finite differences and other 
methods are presented for wave and elliptic problems
in Section~\ref{s7}.

\section{Preliminaries}
\label{s2}
\subsection{Kernel formulation}
\label{subsec:LinearTheory}
\noindent
 This subsection reviews basic results
 for Cauchy problems of the form 
\begin{empheq}[left=\empheqlbrace]{align}
 & \partial_{tt}u(x,t)-c^2\Delta u(x,t) = g(x,t), 
  &  (x,t)\in\R^d\times(0,\infty),   \nonumber %
  \\
  & u(x,0)=\phi(x),                       & x\in\R^d,   \nonumber %
  \\
 & \partial_t u(x,0)=\psi(x),           & x\in\R^d. \nonumber %
\end{empheq}
 Duhamel's principle yields the mild form
\begin{equation}
\nonumber %
  u(x,t)
  =
  \partial_t  \int_{\R^d}   \phi(x-cy)K(dy,t) 
   + 
  \int_{\R^d}  \psi(x-cy)K(dy,t) 
   + 
  \int_0^t \int_{\R^d} g(x-cy,t-s)K(dy,s) ds,
\end{equation}
 of \eqref{eq:WaveMain}-\eqref{eq:IC2}, where $K(dy,t)$
 is the kernel defined as 
\begin{equation}
   \nonumber %
 K(dy,t)=
 \begin{cases}
    \displaystyle
    \frac{1}{2}\mathbbm1_{\{|y|<t\}}dy, &
    d=1,\\[8pt]
 \nonumber
        \displaystyle
    \frac{1}{2\pi} \mathbbm1_{\{|y|<t\}} \frac{dy}{\sqrt{t^2-|y|^2}}, &
    d=2,\\[8pt]
\nonumber
         \displaystyle
    \frac{1}{4\pi t} \sigma_t^{(2)}(dy), &
    d=3,
  \end{cases}
\end{equation}
 where $|\cdot |$ denotes Euclidean norm  
 and $\sigma_t^{(2)}$ is the surface measure on the $2$-dimensional
 sphere $S_2 (0,t)$. 
 From \cite[§2.4]{Eva10}, we can distinguish the following
 cases,
 where ${\cal C}^k(\real^d)$
 denotes the set of functions on $\real^d$ that are
 continuously differentiable of order $k\geq 0$,
 with ${\cal C}(\real^d) = {\cal C}^0(\real^d)$.
\begin{enumerate}[i)] %
\item
\noindent
 $d=1$. Under the conditions
 $\phi \in {\cal C}^2(\R)$, 
 $\psi\in {\cal C}^1(\R)$ and 
 $g\in {\cal C}^1 (\R\times[0,\infty))$,
 from \cite[§2.4,~Eqs.~(8),~(43)]{Eva10}, 
 $u(x,t)$ satisfies d'Alembert's formula 
\begin{equation}
\label{eq:Sol1D}
 u(x,t)=\frac{1}{2}\bigl(
 \phi(x+ct)+\phi(x-ct)\bigr) 
         +\frac{1}{2} \int_{-t}^{t} \psi(x+cy) dy
         +\frac{1}{2} \int_0^t \int_{-s}^{s} g(x+cy,t-s) dy ds, 
\end{equation} 
 $(x,t)\in \real\times \real_+$. 
\item $d$ even, $d\geq 2$.
  Under the conditions $\phi\in {\cal C}^{2+d/2}(\R^d)$,
$\psi\in {\cal C}^{1+d/2}(\R^d)$ and 
$g\in {\cal C}^{1+d/2}\bigl(\R^d\times[0,\infty)\bigr)$,
for $d=2m$, $m\in \N$, %
  from \cite[§2.4,~Eqs.~(38),~(41)-(42)]{Eva10} we have 
\begin{align}
     \nonumber
  u(x,t) & =
  \frac{1}{(2\pi)^m}
  \partial_t \left(\frac{1}{t}\partial_t \right)^{m-1}
        \int_{B_d(0,t)} \frac{\phi(x+cy)}{\sqrt{t^2-|y|^2}} dy
        +\frac{1}{(2\pi)^m}
        \left(\frac{1}{t}\partial_t \right)^{m-1}
     \int_{B_d(0,t)} \frac{\psi(x+cy)}{\sqrt{t^2-|y|^2}} dy
          \\[4pt]
\nonumber %
     &
  \quad   +\frac{1}{(2\pi)^m}
     \int_0^t
     \left(\frac{1}{s}\partial_{s}\right)^{m-1}
         \int_{B_d(0,s)} \frac{g(x+cy,t-s)}{\sqrt{s^2-|y|^2}} dy ds
, 
\end{align}
$(x,t)\in \real^d\times (0,\infty )$,
 where $B_d(0,t)$ denotes the
 $d$-dimensional open ball with radius $t\geq 0$. 
 In particular, for $d=2$ this yields the Poisson planar kernel 
\begin{align}
  \label{eq:PlanarKernel}
    u(x,t) & = \frac{1}{2\pi t}
 \int_{B_2(0,t)}
\frac{ \phi(x+cy)+ \nabla \phi(x+cy) \cdot (cy)+ t \psi(x+cy)}
             {\sqrt{t^2-|y|^2}} dy
             \\
             \nonumber
             & \quad + 
\frac{1}{2\pi}\int_0^t  \int_{B_2(0,s)}
        \frac{g(x+cy,t-s)}{\sqrt{s^2-|y|^2}} dy ds, 
\end{align} 
 $(x,t)\in \real^2 \times (0,\infty )$, where 
$$
 \nabla \phi(x) \cdot  y:=\sum_{k=1}^dy_{k}\partial_{x_{k}}\phi(x),
 \quad
  x,y \in \real^d, 
$$
 see \cite[§2.4,~Eq.~(27)]{Eva10}. 
\item $d$ odd, $d\geq 3$. 
 Under the conditions $\phi\in {\cal C}^{(d+3)/2}(\R^d)$,
 $\psi\in {\cal C}^{(d+1)/2}(\R^d)$ and 
 $g\in {\cal C}^{(d+1)/2} (\R^d\times[0,\infty))$,
   for $d=2m+1$, $m\in \N$,
   from \cite[§2.4,~Eqs.~(31),~(41)-(42)]{Eva10} we have 
\begin{align}
\nonumber %
  u(x,t) & =
    \frac{1}{2(2\pi)^m}
\partial_t \left(\frac{1}{t} \partial_t \right)^{m-1}
\left(
\frac{1}{t} \int_{S_{d-1} (0,t)} \phi(x+cy) \sigma^{(d-1)}_t (dy)
\right)
\\[4pt]
   \nonumber
   & \quad +\frac{1}{2(2\pi)^m}\left(
   \frac{1}{t}\partial_t \right)^{m-1}
   \left(
   \frac{1}{t} \int_{S_{d-1} (0,t)} \psi(x+cy) \sigma^{(d-1)}_t (dy)
   \right)
   \\[4pt]
   \nonumber
          &
   \quad +\frac{1}{2(2\pi)^m}\int_0^t
   \left(\frac{1}{s}\partial_{s}\right)^{m-1}
        \left(
        \frac{1}{s}
        \int_{S_{d-1} (0,s)}
        g(x+cy,t-s) \sigma^{(d-1)}_s (dy)
        \right)
        ds
,
\end{align}
\noindent
 $(x,t)\in \real^d \times (0,\infty )$,
 where $\sigma_s^{(d-1)}$ is the {surface measure} on the
 $(d-1)$-dimensional sphere $S_{d-1} (0,s)$. 
 In particular, for $d=3$ we find the Kirchhoff spatial kernel 
\begin{align}
\nonumber
\displaystyle
      u(x,t)& = 
      \frac{1}{4\pi t^2}
        \int_{S_2 (0,t)}
        \bigl(
        \phi(x+cy)+\nabla \phi(x+cy) \cdot (cy)+t\psi(x+cy)\bigr) 
         \sigma^{(2)}_t (dy) \\[6pt]
   \label{eq:SpatialKernel}
         & \quad +\frac{1}{4\pi}
        \int_{B_3(0,t)}
        g(x+cy,t-|y|)
        \frac{dy}{|y|}, 
\end{align}
 $(x,t)\in \real^3 \times (0,\infty )$,
 see \cite[§2.4,~Eqs.~(22),~(44)]{Eva10}.
\end{enumerate} 

\subsection{Weak formulation} %
\noindent
Let $U\subset\R^d$ be an open set and $k\geq 0$,
 $p\in [1,\infty ]$. 
 We consider the Sobolev space
\[
  H^{k,p}(U)
  :=\bigl\{u\in L^1_{\mathrm{loc}}(U) \ \! : \ \! \nabla^\alpha u\in L^p (U)\text{ for all }|\alpha|\le k\bigr\} 
\]
and its local version 
$$ 
H^{k,p}_\text{loc}(U):=\{
u\in L^1_{\mathrm{loc}}(U)
 \ \! : \ \! \nabla^\alpha u\in L^p_\text{loc}(U)\text{ for }|\alpha|\le k\}
$$ 
 which allows for growth at infinity.  For any
  Banach space $X$, %
   we also set
\[
L^{\infty}((0,T);X):=\bigl\{v\ :\ (0,T)\to X \ : \
\esssup_{t\in (0,T)}\|v(t)\|_{X}<\infty\bigr\},
 \quad T>0.
\]
 The next two results
 justify our later branching scheme.
\begin{proposition}
\label{thm:GlobalLipschitz}
\textit{(Global existence for Lipschitz $f$, \cite[\S 12.2.1 Theorem 1]{Eva10}).}
For $T>0$
let $f:\R^d\times\R\times\R\longrightarrow\R$ 
be globally Lipschitz, and let 
$\phi \in H^{1,2}_\text{loc}(\R^d)$
and 
$\psi \in L^2_\text{loc}(\R^d)$.
 Then, the initial-value problem 
\begin{empheq}[left=\empheqlbrace]{align}
 & u_{tt} (x,t) -\Delta u(x,t)+f(\nabla u (x,t),\partial_tu(x,t),u(x,t))=0,
  && (x,t)\in\R^d\times (0,T],  \label{eq:SLwavePDE}
    \\
 & u(x,0)=\phi,
    && x \in \R^d , \nonumber %
    \\
 & \partial_tu (x,0)=\psi,
      && x \in \R^d.  \label{eq:SLwaveIC2}
\end{empheq}
   admits a unique weak solution. 
   In addition, if
   $\phi \in H^{2,2}_\text{loc} (\R^d)$
   and 
   $\psi \in H^{1,2}_\text{loc}(\R^d)$,
   we have 
\[
u\in L^{\infty} \bigl((0,T);H^{2,2}_\text{loc}(\R^d)\bigr),
 \ \ 
  \partial_t u\in L^{\infty} \bigl((0,T);H^{1,2}_\text{loc}(\R^d)\bigr), 
 \mbox{ and } 
  \partial_{tt}u\in L^{\infty} \bigl((0,T);L^2_\text{loc}(\R^d)\bigr).
\]
\end{proposition}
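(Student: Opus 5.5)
The plan is a Picard (Banach fixed point) argument on the energy space, localized using finite speed of propagation. The local spaces $H^{k,2}_{\mathrm{loc}}$ carry no global norm, so I first reduce to a bounded region. Fix $R>0$ and the backward cone $C_R:=\{(x,t): |x|<R+T-t,\ 0\le t\le T\}$. I will work on the truncated balls $B_t:=B_d(0,R+T-t)$ with the local energy
\[
E(t):=\frac12\int_{B_t}\bigl(|\partial_t u|^2+|\nabla u|^2+|u|^2\bigr)dx .
\]
Step 1 is a linear local energy estimate for $u_{tt}-\Delta u=h$ with data in $H^{1,2}(B_0)\times L^2(B_0)$. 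Differentiating $E(t)$ and integrating by parts, the boundary flux term on the shrinking sphere has a favourable sign, because $|\partial_t u\,\partial_\nu u|\le\frac12(|\partial_t u|^2+|\nabla u|^2)$. This gives
\[
E(t)\le E(0)+C\int_0^t\bigl(E(s)+\|h(s)\|_{L^2(B_s)}^2\bigr)ds .
\]
The linear problem is solvable, for example by Galerkin approximation or by the explicit kernel formulas of Section~\ref{subsec:LinearTheory} applied to smooth data and then passing to the limit. The estimate also shows that the solution in $C_R$ depends only on the data in $B_0$ and on $h$ restricted to $C_R$.

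Step 2 is the contraction. On $X_T:=\{u:\ u\in L^\infty((0,T);H^{1,2}),\ \partial_tu\in L^\infty((0,T);L^2)\text{ on } C_R\}$, define $\Phi(v)$ as the linear solution with $h=-f(\nabla v,\partial_t v,v)$. Global Lipschitz continuity of $f$ gives
\[
\|f(\nabla v,\partial_tv,v)-f(\nabla w,\partial_tw,w)\|_{L^2(B_s)}^2\le L^2 E_{v-w}(s).
\]
Step 1 then yields $\sup_{t\le T'}E_{\Phi v-\Phi w}(t)\le CT'\sup_{t\le T'}E_{v-w}(t)$, so $\Phi$ is a contraction for $T'$ small. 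The interval length depends only on $L$ and not on the data, because Lipschitz growth is linear. I therefore iterate over $[0,T'],[T',2T'],\dots$ to cover $[0,T]$, using the weighted norm $\sup_t e^{-\gamma t}E(t)$ as an alternative. Uniqueness in $C_R$ follows from the same Gronwall inequality applied to the difference of two weak solutions. Since the solutions on cones with different $R$ agree on overlaps, they patch together into a global weak solution on $\R^d\times(0,T]$.

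Step 3 handles the higher regularity. If $\phi\in H^{2,2}_{\mathrm{loc}}$ and $\psi\in H^{1,2}_{\mathrm{loc}}$, I differentiate the equation formally in $x_k$ (rigorously, with difference quotients $D^h_k u$). The function $w=D^h_k u$ solves a linear wave equation whose source $D^h_k f(\dots)$ is bounded in $L^2$ by $L\,E_w^{1/2}$, again by the Lipschitz property. The local energy estimate therefore bounds $\nabla u,\partial_t u$ in $L^\infty H^{1,2}_{\mathrm{loc}}$ and $L^\infty H^{1,2}_{\mathrm{loc}}$, uniformly in $h$. Then $\partial_{tt}u=\Delta u-f\in L^\infty L^2_{\mathrm{loc}}$. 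To recover $u\in L^\infty H^{2,2}_{\mathrm{loc}}$ I will combine the bound on the gradient of $u$ with the bound on $u$ itself.

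I expect the main obstacle to be the rigor of the local energy identity at the weak-solution level, namely justifying the integration by parts with the moving boundary when $u$ is only in the energy class. I would handle it by proving everything for smooth mollified data and $f$, where the solutions are classical, and then passing to the limit using the uniform Lipschitz-based estimates.
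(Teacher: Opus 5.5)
Your argument is correct, and in substance it is the standard proof behind the cited result. The paper itself gives no argument beyond the reference to Evans. The ingredients are the same:
\begin{itemize}
\item a contraction for $v\mapsto$ (solution of the linear wave equation with source $-f(\nabla v,\partial_t v,v)$) in the energy norm;
\item a step length depending only on the Lipschitz constant, so that you can iterate up to $T$ (or use your weighted norm);
\item Gronwall for uniqueness;
\item difference quotients for the $H^{2,2}\times H^{1,2}$ regularity, which need only the Lipschitz bound $|D^h_k f(\nabla u,\partial_t u,u)|\le L(|\nabla D^h_k u|+|D^h_k\partial_t u|+|D^h_k u|)$.
\end{itemize}

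What you do differently is localize to the backward cones $C_R$. Evans works in the global spaces $H^{1,2}(\R^d)\times L^2(\R^d)$, where no moving boundary appears and the energy estimate comes from the global linear theory. There, however, requiring $f(\nabla u,\partial_t u,u)\in L^2(\R^d)$ forces $f(0,0,0)=0$. Moreover, the $H^{1,2}_{\rm loc}\times L^2_{\rm loc}$ data in the statement as written are not literally covered by the citation. Your cone argument proves exactly the local statement, with no condition on $f$ at the origin, and gives finite speed of propagation as a by-product.

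The price is the local energy inequality on shrinking balls at the level of weak solutions, and here your proposed justification needs one adjustment. Approximating $\phi$, $\psi$ and $f$ by smooth ones only shows that the solution you construct satisfies the inequality. It says nothing about an arbitrary weak solution, so it cannot deliver the uniqueness you claim in Step~2. That would require uniqueness for the linear problem on the cone, which is the point at issue.

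Instead, mollify the given weak solution in space. If $w$ solves $w_{tt}-\Delta w=h$ weakly, then $\eta_\varepsilon*w$ solves the same equation with source $\eta_\varepsilon*h$. It is smooth in $x$, and $\partial_{tt}(\eta_\varepsilon*w)=\Delta(\eta_\varepsilon*w)+\eta_\varepsilon*h$ is square integrable in time. The moving-boundary computation is therefore legitimate on a slightly smaller cone, and letting $\varepsilon\to 0$ gives the inequality for $w$ itself. Apply this to the difference of two weak solutions for uniqueness, and to $w=D^h_k u$ in Step~3.
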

\noindent 
When $f$ is a polynomial of degree $N\ge2$ (smooth but not globally Lipschitz),
the hypotheses of Proposition~\ref{thm:GlobalLipschitz} fail; in this case, 
Proposition~\ref{thm:LocalSmooth} guarantees a solution on a possibly small
time interval.
\begin{proposition}
  \label{thm:LocalSmooth}
  \textit{(Short-time existence for smooth $f$,
    \cite[\S 12.2.2 Theorem 3]{Eva10}).}
  Assume that $f\in {\cal C}^{\infty}$, and let $k>1+d/2$.
  Then, for any initial data
  $\phi \in H^{k,2}(\R^d)$
  and 
  $\psi \in H^{k-1,2}(\R^d)$
  there exists $T>0$ such that
 the solution $u$ of 
 \eqref{eq:SLwavePDE}-\eqref{eq:SLwaveIC2}
 satisfies 
$$
 u\in L^{\infty} \bigl(0,T;H^{k,2}(\R^d)\bigr) \ \ \mbox{and }
 \ \ \partial_t u\in L^{\infty} \bigl(0,T;H^{k-1,2}(\R^d)\bigr).
 $$ 
\end{proposition}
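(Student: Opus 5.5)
The plan is the standard energy-method argument of \cite[\S 12.2.2]{Eva10}: a contraction mapping in a high-order Sobolev energy space on a short time interval, followed by a continuation step. Fix $k>1+d/2$ and consider
\[
X_T:=\Bigl\{v:\ \sup_{0\le t\le T}\bigl(\|v(t)\|_{H^{k,2}}+\|\partial_t v(t)\|_{H^{k-1,2}}\bigr)\le M\Bigr\},
\]
with the metric induced by the weaker energy norm $\sup_t\bigl(\|v(t)\|_{H^{1,2}}+\|\partial_tv(t)\|_{L^2}\bigr)$. The ball is closed in this weaker topology by weak-$*$ lower semicontinuity of norms, and $M$ is a multiple of $\|\phi\|_{H^{k,2}}+\|\psi\|_{H^{k-1,2}}$. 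Given $v\in X_T$, let $u=\Phi(v)$ be the solution of the linear problem
\[
u_{tt}-\Delta u=-f(\nabla v,\partial_tv,v),\qquad u(\cdot,0)=\phi,\quad \partial_t u(\cdot,0)=\psi .
\]
This linear problem is solvable in the stated spaces by Galerkin approximation, or via the Duhamel/kernel representation of Section~\ref{subsec:LinearTheory} for smooth data followed by density.

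The key steps, in order, are as follows.

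\emph{Linear energy estimate.} Apply $\nabla^\alpha$ with $|\alpha|\le k-1$ to the equation, multiply by $\partial_t\nabla^\alpha u$ and integrate. This gives
\[
E_k(u)(t)\le E_k(u)(0)+C\int_0^t \|F(s)\|_{H^{k-1,2}}\,E_k(u)(s)^{1/2}\,ds,
\qquad F:=f(\nabla v,\partial_tv,v).
\]

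\emph{Nonlinear estimate (Moser inequality).} Since $k-1>d/2$, the space $H^{k-1,2}$ is an algebra embedded in $L^\infty$. The Moser-type composition estimate then gives
\[
\|F\|_{H^{k-1,2}}\le C(M)\bigl(1+\|(\nabla v,\partial_t v,v)\|_{H^{k-1,2}}\bigr)\le C(M).
\]
This uses smoothness of $f$ on the bounded range of $(\nabla v,\partial_tv,v)$, which is controlled by Sobolev embedding. Any constant term $f(0)$ must be handled, for instance by requiring $f(0,0,0)=0$ or by working locally.

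\emph{Self-map and contraction.} By Gronwall, $E_k(u)\le 2E_k(0)$ for $T$ small depending on $M$, so $\Phi(X_T)\subset X_T$. For two inputs $v_1,v_2$, the difference $u_1-u_2$ solves a linear wave equation with forcing $f(\cdot v_1)-f(\cdot v_2)$. Because $f$ is smooth and the arguments are bounded in $L^\infty$, this forcing is Lipschitz in $L^2$, and the low-order energy estimate gives
\[
\|\Phi v_1-\Phi v_2\|\le CT\|v_1-v_2\|.
\]
Banach's fixed point theorem then yields $u$. Its regularity $u\in L^\infty(0,T;H^{k,2})$ and $\partial_tu\in L^\infty(0,T;H^{k-1,2})$ follows from membership in $X_T$.

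The main obstacle is the nonlinear estimate in the second step, specifically the dependence of $f$ on $\nabla u$. The forcing $F$ involves $\nabla v$, which lies only in $H^{k-1,2}$, so estimating $F$ in $H^{k-1,2}$ costs no derivatives beyond what the energy controls. This requires the composition (Moser) inequality together with $k-1>d/2$, which is exactly where the threshold $k>1+d/2$ enters. I would prove the composition bound by the Gagliardo--Nirenberg interpolation argument, expanding $\nabla^\alpha F$ by the chain rule. Finally, uniqueness follows from the same difference estimate.
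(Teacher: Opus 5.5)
Your proposal is correct and follows the standard energy-estimate, Moser-inequality and contraction argument of \cite[\S 12.2.2]{Eva10}; the paper gives no proof beyond that citation, and the threshold $k>1+d/2$ enters exactly where you say, through $H^{k-1,2}\subset L^\infty$ applied to $(\nabla v,\partial_t v,v)$. Your caveat about the constant term should be an explicit hypothesis rather than an alternative: without $f(0,0,0)=0$ the conclusion fails (for example, $f\equiv 1$ with $\phi=\psi=0$ forces $u=-t^2/2\notin L^2(\R^d)$), and working locally would only give $H^{k,2}_{\rm loc}$ bounds, not the stated $L^\infty(0,T;H^{k,2}(\R^d))$.
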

\noindent 
It is further shown in \cite[§12.5.2]{Eva10} that, for the prototype
$f(u) = - |u|^p$ in dimension $d=3$, no global smooth solution exists when $1<p<1+\sqrt{2}$, even for arbitrarily small, compactly supported data.
This non-existence justifies our focus on
 short-time 
Monte Carlo approximations in the higher order polynomial regime.

\subsection{Mild formulation} 
\label{subsec:MildSol}

\noindent
In what follows, we state a mild formulation for the complex 
 space problem \eqref{eq:WaveMain}-\eqref{eq:PolyNonlin}.

\begin{definition}
A function \(u:\C^d\times[0,\infty)\to\C\) is a \emph{mild solution}
of \eqref{eq:WaveMain}-\eqref{eq:PolyNonlin} if it satisfies the following integral identities.
\begin{enumerate}[i)] %

\item
\noindent
\(d=1\). In this case, using d'Alembert's formula~\eqref{eq:Sol1D},
 $u(z,t)$ must satisfy
\begin{equation}\label{eq:Mild1D}
  u(z,t)=\frac{1}{2}(\phi(z+ct)+\phi(z-ct))
         +\frac{1}{2}\int_{-t}^{t}\psi(z+cy)\,dy
         +\frac{1}{2}\int_0^t\int_{-s}^{s}
           f\bigl(u(z+cy,t-s)\bigr)\,dy\,ds,
\end{equation}
for \((z,t)\in\C\times[0,\infty)\).

\item
\noindent
\(d=2\). In this case, using the planar Poisson kernel~\eqref{eq:PlanarKernel},
 $u(z,t)$ must satisfy
\begin{align}
\label{qakjfds-1} 
  u(z,t)
  &= \frac{1}{2\pi}
     \int_{B_2(0,t)}
     \frac{\phi(z+cy)+\nabla \phi(z+cy)\cdot(cy)+t\,\psi(z+cy)}
          {t\sqrt{t^2-|y|^2}}\,dy
\\[-2pt]
  \nonumber
  &\quad
  + \frac{1}{2\pi}\int_0^t\int_{B_2(0,s)}
     \frac{f\bigl(u(z+cy,t-s)\bigr)}{\sqrt{s^2-|y|^2}}\,dy\,ds,
  \quad (z,t)\in \C^2 \times(0,\infty),
\end{align}
 provided that $\phi\in {\cal C}^1 (\C^d)$,
 i.e., in polar form, letting
 $\hat{e}_\theta :=(\cos\theta,\sin\theta)$, $\theta\in[0,2\pi]$,
 we have 
\begin{align}
  \nonumber
  u(z,t)
  &= \frac{1}{2\pi} \int_0^t\int_0^{2\pi}
     \frac{s}{t\sqrt{t^2-s^2}}
     \bigl(\phi(z+cs\hat{e}_\theta )
           +\nabla \phi(z+cs\hat{e}_\theta )\cdot(cs\hat{e}_\theta )
           +t\,\psi(z+cs\hat{e}_\theta )\bigr)\,d\theta\,ds
  \\[6pt]
  \label{eq:Mild2Dpolar}
  &\quad
  + \frac{1}{2\pi} \int_0^t\int_0^{s}\int_0^{2\pi}
    \frac{r}{\sqrt{s^2-r^2}}
    \,f\bigl(u(z+cr\hat{e}_\theta ,t-s)\bigr)\,d\theta\,dr\,ds,
  \quad (z,t)\in \C \times(0,\infty). 
\end{align}
\item
\noindent
\(d=3\). In this case, using the spatial Kirchhoff kernel~\eqref{eq:SpatialKernel},
 $u(z,t)$ must satisfy
\begin{align}
\label{qakjfds-2} 
  u(z,t)
  &= \frac{1}{4\pi t^2}
     \int_{S_2(0,t)}
     \bigl(\phi(z+cy)+\nabla \phi(z+cy)\cdot(cy)+t\,\psi(z+cy)\bigr)\,
     \sigma^{(2)}_t (dy)
\\
  \nonumber
  &\quad
  + \frac{1}{4\pi}\int_{B_3(0,t)}
    f\bigl(u(z+cy,t-|y|)\bigr)\,\frac{dy}{|y|},
  \quad (z,t)\in \C^3 \times(0,\infty),
\end{align}
 provided that $\phi\in {\cal C}^1 (\C^d)$,
 or, in spherical coordinates,
 letting $\hat{e}_{\alpha,\theta}:=(\sin\alpha\cos\theta,\sin\alpha\sin\theta,\cos\alpha)$,
 $\theta\in[0,2\pi]$, $\alpha\in[0,\pi]$, we have 
\begin{align}
  \nonumber
  u(z,t)
  &= \frac{1}{4\pi}\int_0^\pi \int_0^{2\pi}
     \bigl(\phi(z+ct\hat{e}_{\alpha,\theta})
           +\nabla \phi(z+ct\hat{e}_{\alpha,\theta})\cdot(ct\hat{e}_{\alpha,\theta})
           +t\,\psi(z+ct\hat{e}_{\alpha,\theta})\bigr)
     \,d\theta\,\sin\alpha\,d\alpha
\\[6pt]
  \label{eq:Mild3Dpolar}
  &\quad
  + \frac{1}{4\pi}\int_0^{t}\int_0^\pi \int_0^{2\pi}
    s\,f\bigl(u(z+cs\hat{e}_{\alpha,\theta},t-s)\bigr)
    \,d\theta\,\sin\alpha\,d\alpha\,ds,
  \quad (z,t)\in \C^3 \times(0,\infty). 
\end{align}
\end{enumerate}
\end{definition}
\noindent
 For example,
   when $d=1$, 
   $\phi (z) = \mathbbm{1}_{[0,\infty )}(
     \operatorname{Re}(z)
     )$, 
   $\psi (z)= 0$ and $f(x)=0$, \eqref{eq:Mild1D}
     yields the mild solution
     $$
     u(z,t) =
     \frac{1}{2}
     \mathbbm{1}_{[-t,t)}(
       \operatorname{Re}(z)
       )
       +
       \mathbbm{1}_{[t,\infty )}(
         \operatorname{Re}(z)
         ),
         \quad
         (z,t)\in\C\times[0,\infty).
           $$

\section{Random trees} %
\label{s3}
\noindent 
\noindent
We now consider a stochastic branching
functional whose Monte Carlo expectation reproduces the mild
formulations
of the previous section.
For this, we construct 
a {marked Galton--Watson tree}
 on a probability space $(\Omega,\mathcal F,\PP )$, 
 which encodes the polynomial nonlinearity of \eqref{eq:PolyNonlin}, and whose marks reproduce the linear wave propagation.
 We will also construct a random
   functional $\mathcal{H}(z,t)$ whose expectation will be shown
   in Section~\ref{s4} to coincide with the value
   $u(z,t)$ of the
   mild solution introduced in \S\ref{subsec:MildSol}.  The ingredients are:  
\begin{itemize}
\item an {offspring mechanism} reflecting the coefficients $(a_k)_{k=0}^N$;  
\item {exponential lifetimes} modelling the time variable; and  
\item {spatial marks} $X_\kappa $ that emulate the light cone spread of the $d$-dimensional wave kernel.
\end{itemize}
\noindent
Table~\ref{jdklsda1} records the correspondence between the branching construction and the terms in the mild formulation of the PDE:
\begin{table}[H] 
\centering
\renewcommand{\arraystretch}{1.25}
\begin{tabular}{>{\raggedright\arraybackslash}p{0.32\textwidth}>{\raggedright\arraybackslash}p{0.58\textwidth}}
\hline
\textbf{Branching object} & \textbf{Role in the PDE representation} \\
\hline
Exponential lifetime $\tau_\kappa$ & Randomizes the time variable in the Duhamel integral. \\
Offspring number $J_\kappa$ & Encodes the polynomial nonlinearity $f(u)=\sum_{j=0}^N a_j u^j$, with $J_\kappa=j$ corresponding to the $j$-th power of the solution. \\
Spatial mark $X_\kappa$ & Samples the classical wave kernel and keeps the spatial displacement inside the light cone. \\
Boundary node $\kappa' \in {\cal K}^{\rm b}(t,\kappa )$ & Contributes the initial data terms involving $\phi$, $\psi$, and, in dimensions $d=2,3$, $\nabla\phi$. \\
Interior node $\kappa' \in {\cal K}^\circ (t,\kappa )$ & Contributes the nonlinear branching weight $a_{J_{\kappa'}}/q_{J_{\kappa'}}$ together with the time factor from the Duhamel integral. \\
Product over descendants & Represents the recursive multiplication of independent subtrees, corresponding to powers of $u$ in the polynomial nonlinearity. \\
\hline
\end{tabular}
\caption{Branching tree notation.}
\label{jdklsda1}
\end{table}
\noindent
We describe these elements in turn.

\subsection{Tree structure}
\noindent Particles are denoted by multi-indices
 $\kappa=(1,\kappa_2,\dots,\kappa_{|\kappa|})\in\N^{|\kappa|}$
 in the set of sequences 
$$
 \mathbb{K} := \{ (1) \} \cup \bigcup_{n\geq 1}
 \big(
 \{(1) \} \times \{1,\ldots , N\}^n
 \big)
$$ 
 where $(1)$ denotes the root particle.
\smallskip
\begin{enumerate}[i)] %
    \item Each particle $\kappa$ lives an exponential time $\tau_\kappa \sim\mathrm{Exp}(\lambda)$.
    \item Upon death, a particle
      $\kappa=(1,\kappa_2,\dots,\kappa_{|\kappa|})\in\N^{|\kappa|}$
      produces \(J_\kappa\) offspring,
      labeled
      $$
      (\kappa , j)
      =(1,\kappa_2,\dots,\kappa_{|\kappa|},j),
      \quad j=1,\ldots ,J_\kappa .
      $$
      The offspring count \(J_\kappa\) is an integer-valued random variable
      supported on \(\{0,\dots,N\}\) with
      \[
        \PP (J_\kappa=j)=q_j,\qquad j=0,1,\dots,N,
      \]
      where \(q_j=0\) if and only if \(a_j=0\).
      (If \(J_\kappa=0\), no offspring are produced.)
\end{enumerate}
\noindent
In addition, the random variables $\bigl\{\tau_\kappa ,J_\kappa \bigr\}_\kappa $ are i.i.d.\ and independent of the spatial marks introduced below,
and we write $\kappa\preceq\kappa'$ when $\kappa$ is an ancestor of $\kappa'$.

\subsection{Spatial marks}
\noindent
 For each $\kappa \in \mathbb{K}$ we let 
 $$
 Y_\kappa := (2U_\kappa -1)
 \quad
 \mbox{and}
 \quad 
 R_\kappa := \sqrt{1-(1-U_\kappa )^2},
 \quad
 \alpha_\kappa :=\arccos(1-2U_\kappa ),
 $$
 where $(U_\kappa )_{\kappa \in \mathbb{K}}$ is a sequence of
 i.i.d. uniformly distributed random variables on $[0,1]$.
 We consider the $d$-dimensional displacements $X_\kappa :[0,\infty)\to\R^d$
 defined by
 \begin{equation}
   \nonumber %
  X_\kappa (s) :=
  \begin{cases}
    s Y_\kappa , & d=1,\\[4pt]
\nonumber
          \bigl(
        s R_\kappa \cos\Theta_\kappa ,
        s R_\kappa \sin\Theta_\kappa 
      \bigr), & d=2,\\[4pt]
\nonumber
          \bigl(
        s \sin\alpha_\kappa \cos\Theta_\kappa ,
        s \sin\alpha_\kappa \sin\Theta_\kappa ,
        s \cos\alpha_\kappa 
      \bigr), & d=3, 
  \end{cases}
\end{equation}
 $s\geq 0$,
 where $(\Theta_\kappa )_{\kappa \in \mathbb{K}}$
 is a sequence of i.i.d. uniformly distributed
 random variables on $[0,2\pi]$. %
 We note that for \(s>0\), 
  \begin{align}
    \begin{cases}
      d=1:&
      ~sY_\kappa~\mbox{has density~} 
  y \longmapsto \displaystyle\frac{1}{2s} \mathbf1_{[-s,s]}(y),   
     \\[10pt]
\nonumber
d=2:&
 ~(s R_\kappa ,\Theta_\kappa)~\mbox{has density~} 
  (r,\theta) \longmapsto 
    \displaystyle\frac{r}{2\pi s\sqrt{s^2-r^2}} 
    \mathbf1_{[0,s)}(r) \mathbf1_{[0,2\pi)}(\theta); 
    \\[10pt]
\nonumber
d=3:&
 ~(\alpha_\kappa,\Theta_\kappa)~\mbox{has density~} 
  (\alpha,\theta) \longmapsto 
    \displaystyle\frac{\sin\alpha}{4\pi} 
    \mathbf1_{[0,\pi]}(\alpha) \mathbf1_{[0,2\pi)}(\theta). 
\end{cases}
\end{align} 

\subsection{Ancestry and time-truncated tree}
\noindent Given two nodes $\kappa , \kappa' \in \mathbb{K}$, we let 
\begin{itemize}
\item 
    $          {\rm DS}(\kappa ) :=\{\kappa'' : \kappa \preceq\kappa'' \}$
              denote the descendants of
              $\kappa$, and
            \item
            $
          {\rm AN}(\kappa,\kappa' ) :=\{\kappa'' \ : \ \kappa \preceq \kappa'' \preceq\kappa' \}$ 
          denote the
          ancestors of $\kappa'$ between $\kappa$ and $\kappa'$.
\end{itemize}
Given $t\ge0$,
we also consider the random truncated tree ${\cal K}(t,\kappa)$
 that contains $\kappa$ and any descendant $\kappa'$ of $\kappa$
  that satisfies 
$$ 
    \sum_{\kappa''\in {\rm AN}(\kappa,\kappa')\setminus\{\kappa' \}}
    \tau_{\kappa''}<t.
$$ 
 In particular, we have $\kappa \in {\cal K}(0,\kappa)$. 
 The set of boundary nodes of ${\cal K}(t,\kappa)$ is defined as 
$$
    {\cal K}^{\rm b} (t,\kappa)
    :=\bigg\{\kappa' \in{\cal K}(t,\kappa):\textstyle
    \displaystyle
    \sum_{\kappa'' \in {\rm AN}(\kappa,\kappa' )}
    \tau_{\kappa''}\ge t\bigg\},
$$
and the set of its interior nodes is defined as 
$$
{\cal K}^\circ(t,\kappa)
:={\cal K}(t,\kappa)\setminus
  {\cal K}^{\rm b} (t,\kappa), 
$$ 
  with $\kappa \in {\cal K}^{\rm b} (t,\kappa)$
  if $0 \le t \le \tau_{\kappa}$. 
  In addition, we consider the truncated lifetimes
  at time $t>0$, defined as
  \begin{equation}
    \nonumber %
        T_{\kappa'}^{t,\kappa}
        :=\min\bigl\{\tau_{\kappa'} ,
         t-  \sum_{\kappa''\in {\rm AN}(\kappa,\kappa')\setminus\{\kappa'\}}\tau_{\kappa''}\bigr\}, 
      \end{equation}
  with
  \(T_{\kappa}^{t,\kappa}=\tau_{\kappa}\text{ if }\tau_{\kappa}<t\) and \(T_{\kappa}^{t,\kappa}=t\) otherwise. For any interior node $\kappa'\in{\cal K}^\circ(t,\kappa)$, we still have \(T_{\kappa'}^{t,\kappa}=\tau_{\kappa'} \).
\subsection{Cumulative displacement}
\noindent
 Let $\kappa \in \mathbb{K}$ and $t\geq 0$. 
\begin{enumerate}[i)] %
\item {$d=1$.} 
 For every $\kappa'\in{\cal K}(t,\kappa)$, set  
    \begin{equation}\label{eq:CD1-Def}
      \mathcal X_{\kappa'}^{t,\kappa}
        :=\sum_{\kappa''\in {\rm AN}(\kappa,\kappa')\setminus\{\kappa'\}}
            X_{\kappa''} \bigl(T_{\kappa''}^{t,\kappa}\bigr),
      \qquad
      \mathcal X_{\kappa}^{t,\kappa}:=0 .
    \end{equation}
    Because $|X_{\kappa'} (s)|\le s$, the following classical ``light cone'' estimates follow directly:
    \begin{equation}\label{eq:CD1-Cone}
      \bigl|\mathcal X_{\kappa'}^{t,\kappa}\pm
             T_{\kappa'}^{t,\kappa}\bigr|\le t,\qquad
      \bigl|\mathcal X_{\kappa'}^{t,\kappa}
      +
      X_{\kappa'} \bigl(T_{\kappa'}^{t,\kappa}\bigr)\bigr|\le t,
      \qquad\kappa'\in{\cal K}(t,\kappa).
    \end{equation}

\item {$d=2,3$.} 
    For every $\kappa'\in{\cal K}(t,\kappa)$, define  
    \begin{equation}
    \label{eq:CDhigh-Def}
      \mathcal X_{\kappa'}^{t,\kappa}
        :=\sum_{\kappa''\in {\rm AN}(\kappa,\kappa')}
            X_{\kappa''} \bigl(T_{\kappa''}^{t,\kappa}\bigr),
      \qquad
      \mathcal X_{\kappa}^{t,\kappa}:=
      X_{\kappa} \bigl(T_{\kappa}^{t,\kappa}\bigr). 
    \end{equation}
    If $\kappa'$ is a parent and $(\kappa' , j)$ its $j$-th child, then
    \begin{equation}
      \nonumber %
      \mathcal X_{( \kappa' , j)}^{t,\kappa}
        =\mathcal X_{\kappa'}^{t,\kappa}
         +X_{( \kappa' , j)} \bigl(T_{(\kappa' , j)}^{t,\kappa}\bigr),
      \qquad j=1,\dots,J_{\kappa'}.
    \end{equation}
    Again, $|X_{\kappa'} (s)|\le s$ yields the cone bound  
    \begin{equation}\label{eq:CDhigh-Cone}
      \bigl|\mathcal X_{\kappa'}^{t,\kappa}\bigr|\le t,
      \qquad\kappa'\in{\cal K}(t,\kappa).
    \end{equation}
\end{enumerate}

\subsection{Sample realisation}
\noindent
Figure~\ref{sample-tree} shows a sample labelled random tree rooted at $(1)$ and
truncated at time $t$. Each node $\kappa$ survives for a time
$T^{t,(1)}_\kappa $ (for interior nodes $T^{t,(1)}_\kappa =\tau_\kappa $).
Every node carries a spatial mark $X_\kappa \bigl(T^{t,(1)}_\kappa \bigr)$,
which contributes to the cumulative displacement $\mathcal X^{t,(1)}_\kappa $
when $d\in\{2,3\}$ (cf.\ \eqref{eq:CDhigh-Def}), or to the cumulative
displacement of its descendants when $d=1$ (cf.\ \eqref{eq:CD1-Def}). 
In this illustration we take
$f(u)=a_0+a_2 u^2+a_3 u^3$ and choose $q_0,q_2,q_3>0$,
 with $q_0+q_2+q_3 = 1$, 
 so that each interior
 node, upon death, produces $0$, $2$, or $3$ random offsprings
 using the distribution $(q_j)_{j\in\{0,2,3\}}$.

\begin{figure}[H]
\centering
\tikzstyle{level 1}=[level distance=6cm, sibling distance=6cm]
\tikzstyle{level 2}=[level distance=7cm, sibling distance=6.5cm]
\tikzstyle{level 3}=[level distance=8.5cm, sibling distance=5cm]
\tikzstyle{level 4}=[level distance=7.5cm, sibling distance=4cm]
\begin{center}
\resizebox{0.83\textwidth}{!}{
\begin{tikzpicture}[scale=1.0,grow=right, sloped]
    \node[]{} 
    child
    {
        node[name=data, fill=gray!05, draw, rounded corners=5pt] (main){\large $(1)$}
        child
        {
            node[draw,fill=gray!05,text=black,thick,xshift=-1cm,yshift=-2cm,rounded corners=5pt]{\large $(1,2)$} 
            child
            {
                node[name=data,draw, fill=gray!05,rounded corners=5pt,xshift=1cm,yshift=-0.5cm]{\large $(1,2,2)$}
                edge from parent
                node[above,trapezium,draw,fill=gray!05]{$T^{t,(1)}_{(1,2,2)}=\tau_{(1,2,2)}$}
                node[rectangle,draw,fill=gray!05,below]{$X_{(1,2,2)}\bigl(\tau_{(1,2,2)}\bigr)$}
            }
            child
            {
                node[draw,black,text=black,thick,xshift=-2cm,yshift=-0.5cm,fill=gray!05,rounded corners=5pt]{\large $(1,2,1)$} 
                child
                {
                    node[name=data,draw,fill=gray!15, rounded corners=5pt,xshift=1.5cm,yshift=-0.5cm]{\large $(1,2,1,3)$}
                    edge from parent
                    node[above,trapezium,draw,fill=gray!05]{$T^{t,(1)}_{(1,2,1,3)}$}
                    node[rectangle,draw,fill=gray!05,below]{$X_{(1,2,1,3)}\bigl(T^{t,(1)}_{(1,2,1,3)}\bigr)$}
                }
                child
                {
                    node[name=data,draw,fill=gray!15, rounded corners=5pt,xshift=1.5cm,yshift=0cm]{\large $(1,2,1,2)$}
                    edge from parent
                    node[above,trapezium,draw,fill=gray!05]{$T^{t,(1)}_{(1,2,1,2)}$}
                    node[rectangle,draw,fill=gray!05,below]{$X_{(1,2,1,2)}\bigl(T^{t,(1)}_{(1,2,1,2)}\bigr)$}
                }
                child
                {
                    node[name=data,draw,fill=gray!15, rounded corners=5pt,xshift=1.5cm,yshift=1.4cm]{\large $(1,2,1,1)$}
                    edge from parent
                    node[above,trapezium,draw,fill=gray!05]{$T^{t,(1)}_{(1,2,1,1)}$}
                    node[rectangle,draw,fill=gray!05,below]{$X_{(1,2,1,1)}\bigl(T^{t,(1)}_{(1,2,1,1)}\bigr)$}
                }
                edge from parent
                node[above,trapezium,draw,fill=gray!05]{$T^{t,(1)}_{(1,2,1)}=\tau_{(1,2,1)}$}
                node[rectangle,draw,fill=gray!05,below]{$X_{(1,2,1)}\bigl(\tau_{(1,2,1)}\bigr)$}
            }
            edge from parent
            node[above,trapezium,draw,fill=gray!05]{$T^{t,(1)}_{(1,2)}=\tau_{(1,2)}$}
            node[rectangle,draw,fill=gray!05,below]{\large $X_{(1,2)}\bigl(\tau_{(1,2)}\bigr)$}
        }
        child
        {
            node[draw,black,text=black,thick,yshift=0.4cm,xshift=2cm,fill=gray!05,rounded corners=5pt]{\large $(1,1)$}
            child
            {
                node[draw,black,text=black,thick,xshift=-2cm,yshift=2.2cm,fill=gray!05,rounded corners=5pt]{\large $(1,1,3)$}
                edge from parent
                node[above,trapezium,draw,fill=gray!05]{$T^{t,(1)}_{(1,1,3)}=\tau_{(1,1,3)}$}
                node[rectangle,draw,fill=gray!05,below]{$X_{(1,1,3)}\bigl(\tau_{(1,1,3)}\bigr)$}
            }
            child
            {
                node[draw,black,text=black,thick,xshift=4cm,yshift=1.2cm,fill=gray!15,rounded corners=5pt]{\large $(1,1,2)$}
                edge from parent
                node[above,trapezium,draw,fill=gray!05
                ]{$T^{t,(1)}_{(1,1,2)}$}
                node[rectangle,draw,fill=gray!05,below]{$X_{(1,1,2)}\bigl(T^{t,(1)}_{(1,1,2)}\bigr)$}
            }
            child
            {
                node[name=data, draw, fill=gray!15, yshift=1cm,xshift=4cm,rounded corners=5pt] {\large $(1,1,1)$}
                edge from parent
                node[above,trapezium,draw,fill=gray!05]{$T^{t,(1)}_{(1,1,1)}$}
                node[rectangle,draw,fill=gray!05,below]{$X_{(1,1,1)}\bigl(T^{t,(1)}_{(1,1,1)}\bigr)$}
            }
            edge from parent
            node[above,trapezium,draw,fill=gray!05]{$T^{t,(1)}_{(1,1)}=\tau_{(1,1)}$}
            node[rectangle,draw,fill=gray!05,below]{\large $X_{(1,1)}\bigl(\tau_{(1,1)}\bigr)$}
        }
        edge from parent
        node[above,trapezium,draw,fill=gray!05] {$T^{t,(1)}_{(1)}=\tau_{(1)}$}
        node[below,rectangle,draw,fill=gray!05] {\large $X_{(1)}\bigl(\tau_{(1)}\bigr)$}
    };
\end{tikzpicture}
}
\begin{tikzpicture}[%
    every node/.style={
        font=\scriptsize,
        text height=1ex,
        text depth=.25ex,
    },
]
\draw[->] (-4.5,-0.5) -- (8.5,-0.5);
\node[anchor=north] at (0,0) { };
\node[anchor=north] at (-4.5,-0.5) {$0$};
\node[anchor=north] at (2,-0.5) {time};
\node[anchor=north] at (8.5,-0.5) {$t$};
\end{tikzpicture}
\end{center}
\vskip-0.3cm
\caption{Sample labelled random tree rooted at $(1)$, truncated at time $t$.}
\label{sample-tree}
\end{figure}

\subsection{Random functional}
\noindent {\bf Node weights.}
Let $\kappa \in \mathbb{K}$.
For any $z\in \C$, $t\geq 0$
and boundary node \(\kappa'\in {\cal K}^{\rm b} (t,\kappa)\), we
define the boundary weight 
\begin{align}
  \label{eq:Ipartial}
  & W_{\kappa}^{\rm b} (z,t,\kappa')
  \\
  \nonumber
  & \quad
  :=
  \begin{cases}
    \displaystyle
    \frac{1}{2} 
    \bigl(
    \phi(z+c(\mathcal X_{\kappa'} +T_{\kappa'} ))
    +\phi(z+c(\mathcal X_{\kappa'} -T_{\kappa'} ))\bigr)
    +T_{\kappa'}  
    \psi(z+c(\mathcal X_{\kappa'} +X_{\kappa'} (T_{\kappa'} ))), & d=1,
    \\[8pt]
\nonumber
    \displaystyle
         \phi(z+c\mathcal X_{\kappa'} )
     +\nabla \phi(z+c\mathcal X_{\kappa'} ) \cdot cX_{\kappa'} (T_{\kappa'} )
     +T_{\kappa'}  
       \psi(z+c\mathcal X_{\kappa'} ), & d=2,3,
\end{cases}
\end{align} 
where for brevity we let 
\(
  \mathcal X_{\kappa'} =\mathcal X_{\kappa'}^{t,\kappa}, 
  T_{\kappa'} =T_{\kappa'}^{t,\kappa}.
\)
For any interior node \(\kappa'\in{\cal K}^\circ(t,\kappa)\), we also put
\begin{equation}
  \label{eq:Io}
  W_{\kappa}^\circ (t,\kappa')
    :=\frac{\tau_{\kappa'} }{\lambda} 
      \frac{a_{J_{\kappa'} }}{q_{J_{\kappa'} }}.
\end{equation}

\begin{definition}
\noindent %
 For $z\in \C^d$, $t\geq 0$ and $\kappa \in \mathbb{K}$, 
 we let $ \mathcal{H}(z,t,\kappa)$ be the random functional
 defined as 
 \begin{equation}
   \label{eq:GlobalWeight}
  {%
  \mathcal{H}(z,t,\kappa)
     :=\prod_{\kappa'\in{\cal K}(t,\kappa)}
             e^{\lambda T_{\kappa'}^{t,\kappa}} 
        \prod_{\kappa'\in{\cal K}^{\rm b} (t,\kappa)}
             W_{\kappa}^{\rm b} (z,t,\kappa') 
        \prod_{\kappa'\in{\cal K}^\circ(t,\kappa)}
             W_{\kappa}^\circ ( t, \kappa')}
\end{equation}
 generated by the subtree rooted at $\kappa$ and truncated at time $t$.
\end{definition}
\noindent 
 The three products in \eqref{eq:GlobalWeight}
 correspond to {exponential survival time of each node}, {boundary nodes contributions} and {interior nodes branching weights}, respectively.
 We also use the abbreviation 
\(
  \mathcal{H}(z,t):=\mathcal{H}(z,t,(1))
\).

\subsection{Branching recursion}

\noindent Following from \eqref{eq:GlobalWeight}, we
derive a branching recursion based on first-step decomposition at a generic node $\kappa$.
\begin{lemma} 
  Let $\kappa \in \mathbb{K}$ and $t\geq 0$.
  We have the relations 
  \begin{enumerate}[i)] %
  \item For $d=1$, we have 
    \begin{align}
\nonumber   
    \mathcal{H}(z,t,\kappa) & = \mathbf1_{\{\tau_\kappa  \ge t\}} e^{\lambda t} \biggl(
    \frac{1}{2}\phi(z+ct) + \frac{1}{2}\phi(z-ct) + t\psi(z+cX_\kappa (t))\biggl)
    \\
    \label{eq:Recursion1D}
    & \quad + \mathbf1_{\{\tau_\kappa  < t\}} e^{\lambda \tau_\kappa } \frac{\tau_\kappa }{\lambda} \frac{a_{J_\kappa}}{q_{J_\kappa}} \mathop{{\prod_{j =1}^{J_\kappa}}} \mathcal{H}\big(z+cX_\kappa (\tau_\kappa),t-\tau_\kappa , ( \kappa , j )\big). %
\end{align}
\item For $d=2,3$, we have
  \begin{align}
    \nonumber
    \mathcal{H}(z,t,\kappa) & = \mathbf1_{\{\tau_\kappa  \geq t\}} e^{\lambda t} \bigl(
    \phi(z+cX_\kappa (t)) + \nabla \phi(z+cX_\kappa (t))\cdot(cX_\kappa (t)) + t\psi(z+cX_\kappa (t))\bigl)
    \\
    \label{eq:RecursionHighD}
    & \quad + \mathbf1_{\{\tau_\kappa  < t\}} e^{\lambda \tau_\kappa } \frac{\tau_\kappa }{\lambda} \frac{a_{J_\kappa}}{q_{J_\kappa}} \mathop{{\prod_{j =1}^{J_\kappa}}} \mathcal{H}\big(z+cX_\kappa (\tau_\kappa),t-\tau_\kappa , ( \kappa , j ) \big). %
\end{align}
\end{enumerate}
\end{lemma}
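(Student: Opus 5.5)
The recursion is a purely pathwise identity, so no expectations enter. I will prove it by conditioning on which of the two events $\{\tau_\kappa\ge t\}$ and $\{\tau_\kappa<t\}$ occurs, and then matching the three products in \eqref{eq:GlobalWeight} term by term. Both events are determined by $\tau_\kappa$ alone, so it suffices to check the identity separately on each of them.

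\emph{Case $\tau_\kappa\ge t$.} Every strict descendant $\kappa'$ of $\kappa$ satisfies $\sum_{\kappa''\in{\rm AN}(\kappa,\kappa')\setminus\{\kappa'\}}\tau_{\kappa''}\ge\tau_\kappa\ge t$. Hence ${\cal K}(t,\kappa)=\{\kappa\}$, with $\kappa\in{\cal K}^{\rm b}(t,\kappa)$, ${\cal K}^\circ(t,\kappa)=\emptyset$ and $T_\kappa^{t,\kappa}=t$. The first product in \eqref{eq:GlobalWeight} reduces to $e^{\lambda t}$, the third is empty, and the second is $W^{\rm b}_\kappa(z,t,\kappa)$. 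This weight is read off from \eqref{eq:Ipartial} with the appropriate conventions.
For $d=1$ we have $\mathcal X_\kappa^{t,\kappa}=0$ by \eqref{eq:CD1-Def}, which gives $\frac12\phi(z+ct)+\frac12\phi(z-ct)+t\psi(z+cX_\kappa(t))$.
For $d=2,3$ we have $\mathcal X^{t,\kappa}_\kappa=X_\kappa(t)$ by \eqref{eq:CDhigh-Def}, which gives the first line of \eqref{eq:RecursionHighD}.

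\emph{Case $\tau_\kappa<t$.} Now $\kappa$ is interior, with $T^{t,\kappa}_\kappa=\tau_\kappa$, so it contributes $e^{\lambda\tau_\kappa}$ and $W^\circ_\kappa(t,\kappa)=\frac{\tau_\kappa}{\lambda}\frac{a_{J_\kappa}}{q_{J_\kappa}}$. The core step is the decomposition
\[
{\cal K}(t,\kappa)=\{\kappa\}\cup\bigcup_{j=1}^{J_\kappa}{\cal K}\bigl(t-\tau_\kappa,(\kappa,j)\bigr),
\]
which is a disjoint union. The same decomposition holds for the boundary and interior parts separately. It follows from the identity
\[
\sum_{\kappa''\in{\rm AN}(\kappa,\kappa')}\tau_{\kappa''}=\tau_\kappa+\sum_{\kappa''\in{\rm AN}((\kappa,j),\kappa')}\tau_{\kappa''},
\qquad \kappa'\in{\rm DS}((\kappa,j)).
\]
The same identity yields $T^{t,\kappa}_{\kappa'}=T^{t-\tau_\kappa,(\kappa,j)}_{\kappa'}$ for every such $\kappa'$. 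Consequently the exponential factors and the interior weights factor over the subtrees, since $W^\circ$ does not depend on the root.

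For the boundary weights I will show that the cumulative displacements shift by a constant:
\[
\mathcal X^{t,\kappa}_{\kappa'}=X_\kappa(\tau_\kappa)+\mathcal X^{t-\tau_\kappa,(\kappa,j)}_{\kappa'},\qquad \kappa'\in{\cal K}\bigl(t-\tau_\kappa,(\kappa,j)\bigr).
\]
This holds in both conventions, \eqref{eq:CD1-Def} (where the node itself is excluded) and \eqref{eq:CDhigh-Def} (where it is included). In each case the sum over ${\rm AN}(\kappa,\kappa')$ splits into the term indexed by $\kappa$, which equals $X_\kappa(T^{t,\kappa}_\kappa)=X_\kappa(\tau_\kappa)$, plus the sum over ${\rm AN}((\kappa,j),\kappa')$ with matching truncated lifetimes. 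The only point needing care is $d=1$, where $\mathcal X^{t-\tau_\kappa,(\kappa,j)}_{(\kappa,j)}=0$ while $\mathcal X^{t,\kappa}_{(\kappa,j)}=X_\kappa(\tau_\kappa)$, which is still consistent with the shift.

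Since $W^{\rm b}$ depends on $z$ only through $z+c\mathcal X_{\kappa'}$, the shift gives $W^{\rm b}_\kappa(z,t,\kappa')=W^{\rm b}_{(\kappa,j)}(z+cX_\kappa(\tau_\kappa),t-\tau_\kappa,\kappa')$. The remaining ingredients $T_{\kappa'}$ and $X_{\kappa'}(T_{\kappa'})$ are unchanged by the same identity. Regrouping the three products subtree by subtree then gives $\prod_{j=1}^{J_\kappa}\mathcal H(z+cX_\kappa(\tau_\kappa),t-\tau_\kappa,(\kappa,j))$, which is the second line of \eqref{eq:Recursion1D} and of \eqref{eq:RecursionHighD}.

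I expect the main obstacle to be purely combinatorial: verifying the disjoint decomposition of ${\cal K}(t,\kappa)$ into boundary and interior parts, in particular at the threshold where equalities $\sum\tau=t$ occur, together with the $d=1$ versus $d=2,3$ conventions for $\mathcal X$.
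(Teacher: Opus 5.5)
Your proposal is correct and follows the same route as the paper. Both arguments split on $\tau_\kappa\ge t$ versus $\tau_\kappa<t$. In the first case the root contributes $e^{\lambda t}W^{\rm b}$; in the second it contributes $e^{\lambda\tau_\kappa}W^\circ$ times the product of the $J_\kappa$ child-subtree functionals. You also supply bookkeeping the paper leaves implicit, and all of it checks out: the disjoint decomposition of ${\cal K}(t,\kappa)$, the invariance $T^{t,\kappa}_{\kappa'}=T^{t-\tau_\kappa,(\kappa,j)}_{\kappa'}$, and the shift of $\mathcal X^{t,\kappa}_{\kappa'}$ by $X_\kappa(\tau_\kappa)$ under both the $d=1$ and $d=2,3$ conventions.
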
 
\begin{proof}
We distinguish the two cases according to the value of the first lifetime $\tau_\kappa$.

\medskip
\noindent\textbf{Case 1: $\tau_\kappa\ge t$.}
In this case, the root particle $\kappa$ survives beyond the truncation time $t$, hence it belongs to ${\cal K}^{\rm b} (t,\kappa)$, and no branching occurs before time $t$, hence $T_\kappa^{t,\kappa}=t$.
In addition, the cumulative displacement at the root
is $\mathcal X_\kappa^{t,\kappa}=0$ when $d=1$ (see
\eqref{eq:CD1-Def}), while for $d=2,3$ we have
$\mathcal X_\kappa^{t,\kappa}=X_\kappa(T_\kappa^{t,\kappa})=X_\kappa(t)$ (see
\eqref{eq:CDhigh-Def}). Recalling that \eqref{eq:GlobalWeight}
only contains $e^{\lambda t}$ and a
single boundary weight given by \eqref{eq:Ipartial},
we obtain \eqref{eq:Recursion1D} for $d=1$ and
 \eqref{eq:RecursionHighD} for $d=2,3$.

\medskip
\noindent\textbf{Case 2: $\tau_\kappa<t$.}
In this case the particle $\kappa$ dies before time $t$, hence
it belongs to ${\cal K}^\circ (t,\kappa)$, 
  and we have $T_\kappa^{t,\kappa}=\tau_\kappa$. Its contribution
to \eqref{eq:GlobalWeight} is therefore the factor $e^{\lambda\tau_\kappa}$
together with the interior weight 
$$W^\circ_\kappa(t,\kappa)=\frac{\tau_\kappa}{\lambda}\frac{a_{J_\kappa}}{q_{J_\kappa}}
$$ 
 in \eqref{eq:Io}. 
At the death time $\tau_\kappa$, the process branches into $J_\kappa$ i.i.d.\
subtrees rooted at $(\kappa,j)$ for $j=1,\dots,J_\kappa$,
each started from the shifted space-time point
$(z+cX_\kappa(\tau_\kappa),\,t-\tau_\kappa)$. Since \eqref{eq:GlobalWeight} is a
product over node contributions,
 $\mathcal{H}(z,t,\kappa)$ factors as the product of
the root contribution
$$
e^{\lambda\tau_\kappa}
W^\circ_\kappa(t,\kappa)=
e^{\lambda\tau_\kappa}
\frac{\tau_\kappa}{\lambda}\frac{a_{J_\kappa}}{q_{J_\kappa}}
$$
 and the product over $j=1,\dots,J_\kappa$ of
 sub-branches functionals,
 $\mathcal{H}\big(z+cX_\kappa (\tau_\kappa),t-\tau_\kappa , ( \kappa , j ) \big)$,
 which yields the second terms in \eqref{eq:Recursion1D} and
\eqref{eq:RecursionHighD}.
\end{proof}

\subsubsection*{Algorithms} 

\noindent
The branching recursions \eqref{eq:Recursion1D}-\eqref{eq:RecursionHighD} are implemented in the following algorithms.

\begin{algorithm}[H]
  \footnotesize
  \caption{\footnotesize Monte Carlo estimation in one spatial dimension ($d=1$).}
\label{alg:d1}
\begin{algorithmic}%
\Require time $t$, spatial point $z$, sample size $n$, rate $\lambda$, coefficients $(a_k)_{k=n}^N$
\State Build probability array $\textbf{q}[0{:}N]$ with
       $q_j>0 \Leftrightarrow a_j\neq0$ and $\sum_{k=0}^N q_k=1$
\State \textbf{for} $i=1,\dots,n$ \textbf{do}   \Comment{outer MC loop}
        \State \hspace{\algorithmicindent}%
               $\text{arr}[i] \gets \textsc{Branch1D}(z,t)$
\State \textbf{return} $\displaystyle\widehat v=\frac1n\sum_{i=1}^n\text{arr}[i]$
\medskip
\Function{Branch1D}{$z, t$}
        \State Draw $\tau\sim\mathrm{Exp}(\lambda)$, $p\sim\mathrm{Unif}[0,1]$
        \If{$\tau\ge t$}                          \Comment{no offspring}
              \State \Return $e^{\lambda t} \Bigl(
                      \frac{1}{2}\phi(z+ct)+\frac{1}{2}\phi(z-ct)+t\psi(z+ct(2p-1))
                      \Bigr)$
        \Else                                      \Comment{branching event}
              \State Draw $J\in\{j:q_j>0\}$ with $\PP (J=j)=q_j$
              \State $H\gets1$
              \For{$\ell=1$ \textbf{to} $J$}
                     \State $H\gets H\times\textsc{Branch1D}\bigl(z+c\tau(2p-1), t-\tau\bigr)$
              \EndFor
              \State \Return $e^{\lambda\tau} 
                     \dfrac{\tau}{\lambda} 
                     \dfrac{a_{J}}{q_{J}} H$
        \EndIf
\EndFunction
\end{algorithmic}
\end{algorithm}

\begin{algorithm}[H]
  \footnotesize
  \caption{\footnotesize Monte Carlo estimation in two spatial dimensions ($d=2$).}
\begin{algorithmic}[1]
\Require $t$, $(z_1,z_2)$, $n$, $\lambda$, $(a_j)$
\State Initialise $\textbf{q}[0{:}N]$ as in Alg.~\ref{alg:d1}
\State \textbf{for} $i=1,\dots,n$ \textbf{do} \quad
       $\text{arr}[i]\gets\textsc{Branch2D}(z_1,z_2,t)$
\State \textbf{return} $\widehat v=\frac1n\sum_i\text{arr}[i]$
\medskip
\Function{Branch2D}{$z_1,z_2,t$}
        \State Draw $\tau\sim\mathrm{Exp}(\lambda)$,
               $p\sim\mathrm{Unif}[0,1]$, $\theta\sim\mathrm{Unif}[0,2\pi]$
        \If{$\tau\ge t$}
              \State $R\gets t\sqrt{1-(1-p)^2}$,
                     $y_1\gets cR\cos\theta$, $y_2\gets cR\sin\theta$
              \State $I_1\gets\phi(z_1+y_1,z_2+y_2)$
              \State $I_2\gets y_1 \partial_{z_1}\phi(z_1+y_1,z_2+y_2)+ y_2 \partial_{z_2}\phi(z_1+y_1,z_2+y_2)$
              \State $I_3\gets t \psi(z_1+y_1,z_2+y_2)$
              \State \Return $e^{\lambda t}(I_1+I_2+I_3)$
        \Else
              \State $R\gets \tau\sqrt{1-(1-p)^2}$,
                     $y_1\gets cR\cos\theta$, $y_2\gets cR\sin\theta$
              \State Draw $J$ with $\PP (J=j)=q_j$;  $H\gets1$
              \For{$\ell=1$ \textbf{to} $J$}
                     \State $H\gets H\times
                       \textsc{Branch2D}(z_1+y_1,z_2+y_2,t-\tau)$
              \EndFor
              \State \Return $e^{\lambda\tau}
                     \dfrac{\tau}{\lambda}
                     \dfrac{a_J}{q_J} H$
        \EndIf
\EndFunction
\end{algorithmic}
\end{algorithm}

\vspace{-0.3cm}

\noindent
 In the following algorithm we consider $\arccos$ as
 a function from $[-1,1]$ to $[0,\pi]$.
\begin{algorithm}[H]
  \footnotesize
  \caption{\footnotesize Monte Carlo estimation in three spatial dimensions ($d=3$).}
\begin{algorithmic}[1]
\Require $t$, $(z_1,z_2,z_3)$, $n$, $\lambda$, $(a_j)$
\State Prepare $\textbf{q}$ as before
\State \textbf{for} $i=1,\dots,n$ \textbf{do}\quad
       $\text{arr}[i]\gets\textsc{Branch3D}(z_1,z_2,z_3,t)$
\State \textbf{return} $\widehat v=\frac1n\sum_i\text{arr}[i]$
\medskip
\Function{Branch3D}{$z_1,z_2,z_3,t$}
        \State Draw $\tau\sim\mathrm{Exp}(\lambda)$,
               $p\sim\mathrm{Unif}[0,1]$, $\theta\sim\mathrm{Unif}[0,2\pi]$
        \If{$\tau\ge t$}
              \State $\alpha\gets\arccos(1-2p)$ %
              \State $y_1\gets ct\sin\alpha\cos\theta$,
                     $y_2\gets ct\sin\alpha\sin\theta$,
                     $y_3\gets ct\cos\alpha$
              \State $I_1\gets\phi(z_1+y_1,z_2+y_2,z_3+y_3)$
              \State $I_2 \gets
                      \begin{aligned}[t]
                          &y_1 \partial_{z_1}\phi(z_1+y_1, z_2+y_2, z_3+y_3)+y_2 \partial_{z_2}\phi(z_1+y_1, z_2+y_2, z_3+y_3)\\
\nonumber
                                              &+y_3 \partial_{z_3}\phi(z_1+y_1, z_2+y_2, z_3+y_3)
                      \end{aligned}$
              \State $I_3\gets t \psi(z_1+y_1,z_2+y_2,z_3+y_3)$
              \State \Return $e^{\lambda t}(I_1+I_2+I_3)$
        \Else
              \State $\alpha\gets\arccos(1-2p)$ %
              \State $y_1\gets c\tau\sin\alpha\cos\theta$,
                     $y_2\gets c\tau\sin\alpha\sin\theta$,
                     $y_3\gets c\tau\cos\alpha$
              \State Draw $J$ with $\PP (J=j)=q_j$;  $H\gets1$
              \For{$\ell=1$ \textbf{to} $J$}
                     \State $H\gets H\times
                       \textsc{Branch3D}(z_1+y_1,z_2+y_2,z_3+y_3,t-\tau)$
              \EndFor
              \State \Return $e^{\lambda\tau}
                     \dfrac{\tau}{\lambda}
                     \dfrac{a_J}{q_J} H$
        \EndIf
\EndFunction
\end{algorithmic}
\end{algorithm}

\section{Mild solutions}
\label{s4}
\begin{definition}[Admissible space-time cones]
  Let
  \(t_0>0\).
   \begin{enumerate}[i)] %
    \item
    Forward light cone %
    in $\R^d$.
    Given \(x_0\in\mathbb{R}^d\), let %
$$ 
 \Gamma_{x_0,t_0}(\R^d\times[0,t_0])
  :=\bigl\{(y,s)\in\R^d\times[0,t_0]
 :|y-x_0|\le t_0-s\bigr\}.
 $$
\item
  Forward light cone %
  in $\C^d$.
  Given \(z_0\in\mathbb{C}^d\), let 
$$ 
 \widebar{\Gamma}_{z_0,t_0}(\C^d\times[0,t_0])
   :=\bigl\{(z,s)\in\C^d\times[0,t_0]
:\exists y\in\R^d \text{ s.t. }(y,s)\in
\Gamma_{0,t_0}(\R^d\times[0,t_0])
            \text{ and }z=z_0+cy\bigr\}.
                          $$
             \end{enumerate} 
\end{definition} 
\noindent
 The space-time cone $\widebar{\Gamma}_{z_0,t_0}(\C^d)$
 defined in \eqref{space-time-cone} is the spatial footprint of
 $\widebar{\Gamma}_{z_0,t_0}(\C^d\times[0,t_0])$ at time~$t_0$. 
\noindent
 We will work under the following conditions.
   \begin{assumption}
   \label{assu1}
   Let $(z_0,t_0)\in\C^d\times(0,\infty)$,
   and assume that $\phi, \psi$
   are bounded measurable functions on
   $\widebar{\Gamma}_{z_0,t_0}(\C^d)$, with in addition 
  $\phi\in {\cal C}^1 \big(
  \widebar{\Gamma}_{z_0,t_0}(\C^d)
  \big)$ if $d=2,3$. 
\end{assumption}
\noindent
  The condition $\phi\in {\cal C}^1 \big(
  \widebar{\Gamma}_{z_0,t_0}(\C^d)
  \big)$
  in Assumption~\ref{assu1} 
  will be required for the application of 
 \eqref{qakjfds-1}-\eqref{qakjfds-2}
 when $d=2,3$.
\begin{theorem}[Existence of mild solutions]\label{thm:Mild}
  Let $(z_0,t_0)\in\C^d\times(0,\infty)$.
  Suppose that Assumption~\ref{assu1} holds, and that
 \begin{equation}
   \label{it:Int}
   \E \bigl[|\mathcal{H}(z,t)|\bigr]<\infty,
   \quad (z,t)\in\widebar{\Gamma}_{z_0,t_0}(\C^d\times[0,t_0]). 
 \end{equation}
 Then, {the function} 
\begin{equation}
\nonumber %
v(z,t):=\E \bigl[\mathcal{H}(z,t)\bigr], \quad 
 (z,t)\in\widebar{\Gamma}_{z_0,t_0}(\C^d\times[0,t_0]),
\end{equation}
 yields a {mild solution}
 of \eqref{eq:WaveMain}-\eqref{eq:PolyNonlin},
 which satisfies the integral identity \eqref{eq:Mild1D} for $d=1$,
 resp. \eqref{eq:Mild2Dpolar}, \eqref{eq:Mild3Dpolar} for $d=2,3$. 
\end{theorem}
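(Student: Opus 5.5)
We take expectations in the branching recursion of the Lemma, \eqref{eq:Recursion1D}--\eqref{eq:RecursionHighD}, at the root $\kappa=(1)$, conditioning on the first-generation variables $(\tau_{(1)},J_{(1)},U_{(1)},\Theta_{(1)})$.

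\textbf{Step 1 (independence and identical law).} Conditionally on these variables, the subtrees rooted at $(1,j)$, $j=1,\dots,J_{(1)}$, are independent. Each is built from fresh i.i.d.\ lifetimes, offspring counts and marks. Hence
\[
\mathcal H\bigl(z',t',(1,j)\bigr)\ \text{has the same law as}\ \mathcal H(z',t'),
\]
and for fixed $(z',t')$ we get
\[
\E\Bigl[\prod_{j=1}^{J}\mathcal H\bigl(z',t',(1,j)\bigr)\Bigr]=v(z',t')^{J}.
\]
Measurability of $(z',t',\omega)\mapsto\mathcal H$ lets us substitute the random point $z'=z+cX_{(1)}(\tau_{(1)})$, $t'=t-\tau_{(1)}$ (Fubini / freezing lemma).

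\textbf{Step 2 (cone invariance and the role of \eqref{it:Int}).} If $(z,t)\in\widebar\Gamma_{z_0,t_0}(\C^d\times[0,t_0])$ and $s\le t$, then $(z+cX(s),t-s)$ stays in the cone, because $|X(s)|\le s$. So $v$ is finite at the shifted point. The same cone estimates \eqref{eq:CD1-Cone}, \eqref{eq:CDhigh-Cone} show that $\phi,\psi,\nabla\phi$ are only evaluated on $\widebar\Gamma_{z_0,t_0}(\C^d)$, where they are bounded by Assumption~\ref{assu1}.

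\textbf{Step 3 (main obstacle: justifying the conditional factorization).} Here I need integrability. The recursion gives $|\mathcal H|\ge$ the absolute value of the branching term on $\{\tau<t\}$. Therefore
\[
\E\Bigl[\mathbf 1_{\{\tau<t\}}\,e^{\lambda\tau}\,\tfrac{\tau}{\lambda}\,\Bigl|\tfrac{a_J}{q_J}\Bigr|\,\prod_{j}\bigl|\mathcal H\bigl(\cdot,\cdot,(1,j)\bigr)\bigr|\Bigr]\le \E|\mathcal H(z,t)|<\infty .
\]
The two indicator events are disjoint, so there is no cancellation issue. This bound allows Fubini: conditioning on the first generation, then applying conditional independence, gives a product of conditional expectations. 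Each factor $\E|\mathcal H(z',t')|$ is finite for almost every $(\tau,X)$ with $\tau<t$ on $\{q_J>0\}$, since the product of nonnegative independent factors has finite expectation and the prefactor is positive. So $v(z',t')$ is well defined there and the identity holds.

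\textbf{Step 4 (computing the integrals).} Next we compute expectations against the explicit densities. On $\{\tau\ge t\}$, which has probability $e^{-\lambda t}$, the factor $e^{\lambda t}$ cancels. The mark densities then reproduce the initial-data terms:
\begin{itemize}
\item For $d=1$: $\E[t\psi(z+ctY)]=\tfrac12\int_{-t}^t\psi(z+cy)\,dy$.
\item For $d=2$: the density $r/(2\pi t\sqrt{t^2-r^2})$ yields the first line of \eqref{eq:Mild2Dpolar}, with the factor $1/t$ absorbed.
\item For $d=3$: the uniform sphere measure yields \eqref{eq:Mild3Dpolar}.
\end{itemize}
On $\{\tau<t\}$ the density $\lambda e^{-\lambda s}$ cancels $e^{\lambda s}/\lambda$, leaving the factor $s$. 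Summing over $J$ with weights $q_j\cdot a_j/q_j$ gives $f(v)$. This produces
\[
\int_0^t s\,\E\bigl[f\bigl(v(z+cX(s),t-s)\bigr)\bigr]\,ds .
\]
The mark densities then turn this into:
\begin{itemize}
\item for $d=1$: $\tfrac12\int_0^t\int_{-s}^s f(v(z+cy,t-s))\,dy\,ds$;
\item for $d=2$: $\tfrac1{2\pi}\int_0^t\!\int_0^s\!\int_0^{2\pi}\tfrac{r}{\sqrt{s^2-r^2}}\,f(\cdot)\,d\theta\,dr\,ds$;
\item for $d=3$: $\tfrac1{4\pi}\int_0^t\! s\!\int f(\cdot)\,\sin\alpha\,d\theta\,d\alpha\,ds$.
\end{itemize}
These are exactly \eqref{eq:Mild1D}, \eqref{eq:Mild2Dpolar}, \eqref{eq:Mild3Dpolar}.
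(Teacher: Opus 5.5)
Your proposal is correct and follows essentially the same route as the paper. Both proofs condition on the root variables in the branching recursion, use conditional independence and identical law of the $J_{(1)}$ subtrees to obtain $v\bigl(z+cX_{(1)}(\tau_{(1)}),t-\tau_{(1)}\bigr)^{J_{(1)}}$, and then integrate against the offspring law, the mark densities and the exponential density. Your Steps 2--3 (cone invariance of the shifted points, and the domination bound justifying the conditional factorization) and your explicit treatment of $d=2,3$ make rigorous what the paper's proof leaves implicit, since it is written out only for $d=1$.
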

\begin{proof}
The proof is presented for $d=1$,
the cases $d=2,3$ being analogous
by replacing \eqref{eq:Mild1D}
 with \eqref{qakjfds-1}-\eqref{qakjfds-2}. 
Condition~\eqref{it:Int} implies that the expectation $v(z,t)=\E[\mathcal{H}(z,t)]$ exists for every
$(z,t)\in\widebar{\Gamma}_{z_0,t_0}(\C^d\times[0,t_0])$,
 hence $v$ is well defined. Set
\(
  \mathcal F_{(1)}
  :=\sigma \bigl(\tau_{(1)},J_{(1)},U_{(1)}\bigr),
\)
the information carried by the root node. Using the tower property and the branching recursion \eqref{eq:Recursion1D} we obtain
\begin{align}
\nonumber %
v(z,t)
   &=\E \bigl[\mathcal{H}(z,t,(1))\bigr]
\\
\nonumber
&=\E\Biggl[
      \mathbf1_{\{\tau_{(1)}\ge t\}}e^{\lambda t}
         \Bigl(\frac{1}{2}\phi(z + ct)+\frac{1}{2}\phi(z - ct)
         +t \psi\bigl(z+cX_{(1)}(t)\bigr)\Bigr)
         \\
\nonumber
   &\quad+\mathbf1_{\{\tau_{(1)}< t\}}e^{\lambda\tau_{(1)}}
        \frac{\tau_{(1)}}{\lambda}
           \frac{a_{J_{(1)}}}{q_{J_{(1)}}}
           \prod_{j =1}^{J_{(1)}}
              \mathcal{H}\bigl(
                 z+cX_{(1)}(\tau_{(1)}), 
                 t-\tau_{(1)}, 
                 (1 , j) \bigr)
                 \Biggr]
\\
\label{eq:FirstStepCond} 
   &=\E\Biggl[
      \mathbf1_{\{\tau_{(1)}\ge t\}}e^{\lambda t}
         \Bigl(\frac{1}{2}\phi(z + ct)+\frac{1}{2}\phi(z - ct)
         +t \psi\bigl(z+cX_{(1)}(t)\bigr)\Bigr)
         \\
\nonumber
   &\quad+\mathbf1_{\{\tau_{(1)}< t\}}e^{\lambda\tau_{(1)}}
        \frac{\tau_{(1)}}{\lambda}
           \frac{a_{J_{(1)}}}{q_{J_{(1)}}}
           \E\Biggl[
             \prod_{j =1}^{J_{(1)}}
              \mathcal{H}\bigl(
                 z+cX_{(1)}(\tau_{(1)}), 
                 t-\tau_{(1)}, 
                 (1 , j)\bigr)
                 \bigg| \mathcal{F}_{(1)}
                 \Biggr]
           \Biggr], 
\end{align}
\noindent
 where we used the facts that $\tau_{(1)},J_{(1)},U_{(1)}$ are
 $\mathcal F_{(1)}$-measurable.
When $\tau_{(1)}<t$ and $J_{(1)}\ge1$, independence and identical distribution of the $J_{(1)}$ sub-branches give
\begin{align*} 
  \nonumber %
  \E \Biggl[ \prod_{j =1}^{J_{(1)}}
       \mathcal{H}\bigl(
          z+cX_{(1)}(\tau_{(1)}), 
          t-\tau_{(1)}, 
          (1 , j) \bigr)
     \biggm|\mathcal F_{(1)}\Biggr]
  & =
  \prod_{j =1}^{J_{(1)}}
  \E \Biggl[ 
       \mathcal{H}\bigl(
          z+cX_{(1)}(\tau_{(1)}), 
          t-\tau_{(1)}, 
          (1, j) \bigr)
     \biggm|\mathcal F_{(1)}\Biggr]
  \\
\nonumber
& =\bigl(v(z+cX_{(1)}(\tau_{(1)}), t-\tau_{(1)})\bigr)^{J_{(1)}},
\end{align*} 
so that the integrand in \eqref{eq:FirstStepCond} depends only on $\tau_{(1)},J_{(1)},U_{(1)}$.

\medskip\noindent
\textbf{Continuing from~\eqref{eq:FirstStepCond}.}
Taking expectations in \eqref{eq:FirstStepCond} in the following order-(i) first with respect to the
offspring count by summing over all possibilities of \(J_{(1)}\), i.e.
\(\sum_{j:a_j\neq0} q_j\,(\cdot)\) with \(q_j=\PP (J_{(1)}=j)\); (ii) then conditioning on $\tau_{(1)}=s>0$ and averaging over $X_{(1)}(s)$, which is uniformly distributed on $[-s,s]$ with density $(2s)^{-1}$; and (iii) finally integrating in $s>0$ against the exponential density $\lambda e^{-\lambda s}$ of $\tau_{(1)}$
 yields
\begin{equation}
\nonumber %
v(z,t)
= \frac{1}{2}(\phi(z+ct)+\phi(z-ct)) 
   + \frac{1}{2} \int_{-t}^{t}\!\psi(z+cy)\,dy  
  + \frac{1}{2} \int_{0}^{t}\!\int_{-s}^{s}
     \sum_{k=0}^N %
     a_k v(z+cy,t-s)^k \,dy\,ds,
\end{equation}
which coincides with the one-dimensional mild formulation
 \eqref{eq:Mild1D} with $u$ replaced by~$v$.
\end{proof}
\noindent
 As a consequence of Theorem~\ref{thm:Mild} 
 and Propositions~\ref{prop:N1Lp} and \ref{thm:Nge2Lp} below
 applied for $p=1$, we have the following result.
\begin{theorem}[Existence of mild solutions]\label{thm:mild-existence}
 Let $N \geq 1$ and $(z_0,t_0)\in\C^d \times (0,\infty)$. 
Suppose that Assumption~\ref{assu1} holds,
and that
\begin{equation}
\label{jfklfdsa1a} 
 e^{\lambda t_0N} -e^{\lambda t_0}
 < 
 \frac{1}{\max (1 , C_{\rm b}(t_0,z_0) )^{ N-1}
 \max (1 , t_0 C_\circ /\lambda )} \ \mbox{~if~} N\geq 2, 
\end{equation}
 where we set 
\begin{equation}\label{eq:Apartial0}
C_\circ := \max_{\substack{0\le j\le N\\ a_j\neq 0}} \frac{|a_j|}{q_j}
\end{equation} 
 and %
\begin{equation}\label{eq:Apartial}
C_{\rm b}(t_0,z_0) :=
\begin{cases}
\displaystyle
\sup_{y\in
  \widebar{\Gamma}_{z_0,t_0}(\C^d)
  } |\phi(y)|
\;+\;
t_0\,\sup_{y\in
  \widebar{\Gamma}_{z_0,t_0}(\C^d)
  } |\psi(y)|,
& d=1,\\[3ex]
\displaystyle
\sup_{y\in \widebar{\Gamma}_{z_0,t_0}(\C^d)
  }
\bigl(|\phi(y)| + t_0\,|\psi(y)| + |c|\,t_0\,\|\nabla \phi(y)\|\bigr),
& d=2,3.
\end{cases}
\end{equation}
 Then, for every $(z,t)\in\widebar{\Gamma}_{z_0,t_0}(\C^d\times[0,t_0])$
 and $p\geq 1$
 we have 
$
 \E [ |\mathcal{H}(z,t)| ] < \infty$,
 and the function %
 \begin{equation}
   \label{eq:Estimator}
   v(z,t):=\E \bigl[\mathcal{H}(z,t)\bigr],
   \quad
   (z,t)\in\widebar{\Gamma}_{z_0,t_0}(\C^d\times[0,t_0]), 
\end{equation}
 yields a mild solution
 of \eqref{eq:WaveMain}-\eqref{eq:PolyNonlin},
 which satisfies the integral identity \eqref{eq:Mild1D} for $d=1$,
 resp. \eqref{eq:Mild2Dpolar}, \eqref{eq:Mild3Dpolar} for $d=2,3$. 
\end{theorem}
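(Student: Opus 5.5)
The plan is to reduce Theorem~\ref{thm:mild-existence} to Theorem~\ref{thm:Mild}. Assumption~\ref{assu1} is already in force, so the only thing left to check is the integrability condition \eqref{it:Int} on the whole cone $\widebar{\Gamma}_{z_0,t_0}(\C^d\times[0,t_0])$. I will get it by dominating $|\mathcal{H}(z,t)|$ with a functional that depends only on the tree and not on the spatial marks.

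\textbf{Step 1: uniform bound on the weights.} By the cone estimates \eqref{eq:CD1-Cone} and \eqref{eq:CDhigh-Cone}, every argument at which $\phi,\nabla\phi,\psi$ are evaluated in \eqref{eq:Ipartial} lies in $\widebar{\Gamma}_{z_0,t_0}(\C^d)$ whenever $(z,t)$ lies in the space-time cone. Using also $T_{\kappa'}\le t_0$ and $|X_{\kappa'}(T_{\kappa'})|\le T_{\kappa'}$, this gives $|W^{\rm b}(z,t,\kappa')|\le C_{\rm b}(t_0,z_0)$. For interior nodes, $|W^\circ(t,\kappa')|\le C_\circ\tau_{\kappa'}/\lambda\le t_0C_\circ/\lambda$. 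Putting $B:=\max(1,C_{\rm b})$ and $A:=\max(1,t_0C_\circ/\lambda)$, we obtain
\begin{equation*}
|\mathcal{H}(z,t)|\le \prod_{\kappa'\in{\cal K}(t,(1))}e^{\lambda T_{\kappa'}}\,B^{|{\cal K}^{\rm b}|}A^{|{\cal K}^\circ|}=:\bar{\mathcal H}(t),
\end{equation*}
and the right-hand side no longer depends on $z$.

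\textbf{Step 2: a recursion for the bound.} The recursion \eqref{eq:Recursion1D}--\eqref{eq:RecursionHighD} carries over to $\bar{\mathcal H}$. Conditioning on the root, the function $m(t):=\E[\bar{\mathcal H}(t)]$ (possibly $+\infty$) satisfies
\begin{equation*}
m(t)=B+A\int_0^t \sum_j q_j\, m(t-s)^j\,ds .
\end{equation*}
Here the factor $e^{\lambda s}$ cancels against the survival density $\lambda e^{-\lambda s}$, and the $1/\lambda$ cancels the $\lambda$ in that density. Since $q_j\le1$ and $m\ge B\ge1$, the sum is at most $m^N$, so $m$ is dominated by the solution of the ODE $y'=A y^N$, $y(0)=B$. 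This solution is finite on $[0,t_0]$ exactly when $(N-1)AB^{N-1}t_0<1$.

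\textbf{Step 3: linking to \eqref{jfklfdsa1a}; the main obstacle.} Propositions~\ref{prop:N1Lp} and \ref{thm:Nge2Lp} with $p=1$ should supply this comparison in the exponential form \eqref{jfklfdsa1a}. If I had to prove it myself, I would first try a generating-function argument on the Yule-type process counting $|{\cal K}|$. For a pure $N$-ary splitting at rate $\lambda$, one gets $\E[s^{Z_t}]$ explicitly, and it is finite exactly when a quantity of the form $e^{\lambda t(N-1)}$ compared with $1/(\text{weight})^{N-1}$ stays below a threshold. Combining this with the bound $e^{\lambda\sum T}\le e^{\lambda t|{\rm generations}|}$ should produce the condition $e^{\lambda t_0N}-e^{\lambda t_0}<1/(B^{N-1}A)$.

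Making the exponential survival factors and the branching counts interact correctly is the delicate point. I would handle it by monotonicity: since $\bar{\mathcal H}$ is increasing in $t$, finiteness of $m(t_0)$ gives finiteness of $m(t)$ for all $t\le t_0$. The case $N=1$ is linear, and there $m$ is finite for every $t_0$ by Gronwall. With \eqref{it:Int} established, Theorem~\ref{thm:Mild} then gives the mild solution property of \eqref{eq:Estimator}.
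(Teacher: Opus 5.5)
Your reduction to Theorem~\ref{thm:Mild} and your Step~1 bound match the paper (cf.\ \eqref{eq:Hbound}). The paper's proof of this theorem is just Theorem~\ref{thm:Mild} plus Propositions~\ref{prop:N1Lp} and \ref{thm:Nge2Lp} at $p=1$. The only observation needed is that \eqref{jfklfdsa1a} is exactly \eqref{eq:CriticalIneq} at $p=1$, since $e^{\lambda Nt_0}(1-e^{-\lambda(N-1)t_0})=e^{\lambda Nt_0}-e^{\lambda t_0}$.

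However, you leave open the step you call the main obstacle, and both of your attempts at it contain concrete errors.

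In Step~2, once $W^\circ$ is replaced by the constant $A=\max(1,t_0C_\circ/\lambda)$, the factor $1/\lambda$ is already inside $A$. So nothing cancels the $\lambda$ of the density $\lambda e^{-\lambda s}$, and the correct identity is $m(t)=B+\lambda A\int_0^t\sum_j q_j\,m(u)^j\,du$. The comparison ODE $y'=\lambda Ay^N$, $y(0)=B$, stays finite on $[0,t_0]$ iff $(N-1)\lambda AB^{N-1}t_0<1$. Your threshold $(N-1)AB^{N-1}t_0<1$ is not implied by \eqref{jfklfdsa1a}: take $N=2$, $B=A=1$, $t_0=1$, $\lambda=0.1$. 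Also, ``$m\in[0,\infty]$ satisfies the recursion, hence $m\le y$'' is not a valid inference, because $m\equiv+\infty$ satisfies it too. You must truncate the tree at generation $n$, prove $m_n\le y$ by induction, and pass to the limit by monotone convergence.

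In Step~3, the bound $e^{\lambda\sum_\kappa T_\kappa}\le e^{\lambda t\,|\mathrm{generations}|}$ is false. The sum $\sum_\kappa T_\kappa$ is the time integral over $[0,t]$ of the number of living particles, so it is governed by population size, not depth. The paper instead uses $T_\kappa\le t$ for each node and $|\widetilde{{\cal K}}_{N,t}|=(N|\widetilde{{\cal K}}^{\rm b}_{N,t}|-1)/(N-1)$ from \eqref{eq:DomRelation}, so the whole bound becomes a power of $|\widetilde{{\cal K}}^{\rm b}_{N,t}|$. It then evaluates the explicit generating function \eqref{eq:PGFboundary} at $s=BA^{1/(N-1)}e^{\lambda tN/(N-1)}$, which lies below the radius $s_*(t)$ for all $t\le t_0$ under \eqref{jfklfdsa1a}. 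Incidentally, $\bar{\mathcal H}$ is not pathwise increasing in $t$: a branching with $J=0$ multiplies it by $A/B$. You do not need that monotonicity anyway.

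The cheapest repair stays inside your own route. With $\lambda$ restored, $e^{\lambda Nt_0}-e^{\lambda t_0}=e^{\lambda t_0}\bigl(e^{\lambda(N-1)t_0}-1\bigr)\ge\lambda(N-1)t_0$, so \eqref{jfklfdsa1a} implies $(N-1)\lambda AB^{N-1}t_0<1$. The corrected comparison (linear for $N=1$, giving $m(t)\le Be^{\lambda At}$) then yields $\E[|\mathcal H(z,t)|]\le y(t_0)<\infty$ on the whole cone, and Theorem~\ref{thm:Mild} applies. This would be a genuinely different proof from the paper's, closer to the auxiliary-ODE control of \cite{labordere2}, and it needs a weaker condition than \eqref{jfklfdsa1a}. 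What the paper's generating-function route buys instead is an explicit closed form. That gives the $L^p$ statement directly and, after differentiation as in \eqref{fklsfa}, the weighted moments used in Theorem~\ref{thm:classical}.
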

\noindent
We note that, taking
 $\lambda := t_0$ and 
\[
q_k:=\frac{|a_k|}{|a_n|+ \cdots + |a_N|}, \qquad k=n,\ldots , N, 
\]
 Condition~\eqref{jfklfdsa1a} 
 in Theorem~\ref{thm:mild-existence} 
 can be replaced for $N\ge 2$ with 
\begin{equation}
\label{eq:explicit-threshold-1d-0}
e^{t_0^2 N} - e^{t_0^2}
< 
\frac{1}{
\max (1 , \|\phi\|_{L^\infty ( \C )}+t_0\|\psi\|_{L^\infty ( \C )} )^{\,N-1}
\;\sum_{k=n}^N |a_k|}
\end{equation}
 for $d=1$, and by  
\begin{equation}
\label{eq:explicit-threshold-23d-0}
e^{t_0^2 N} - e^{t_0^2}
< 
\frac{1}{
  \max (1 , \|\phi\|_{L^\infty ( \C^d )} +t_0
   \|\psi\|_{L^\infty ( \C^d )} 
+|c|\,t_0\,\|\nabla \phi\|_{L^\infty ( \C^d )} )^{\,N-1}
\;\sum_{k=n}^N |a_k|}
\end{equation}
 for $d=2,3$. 

\section{Integrability}
\label{s5} 
\noindent
Fix $p\in[1,\infty)$.
  Our aim is to impose {deterministic} restrictions on
  \begin{itemize}
  \item
    the initial data $\phi$, $\psi$,
  \item
    the degree $N$ of the polynomial nonlinearity, and
  \item the coefficients $(a_k)_{k=0}^N$
  \end{itemize}
  that ensure the $p$-$th$ integrability 
   \begin{equation}
     \nonumber %
  \E\bigl[ |\mathcal{H}(z,t)|^p \bigr] < \infty,
  \qquad  \forall (z,t)\in\widebar{\Gamma}_{z_0,t_0}(\C^d\times[0,t_0]), 
\end{equation}
 see Propositions~\ref{prop:N1Lp} and \ref{thm:Nge2Lp}. 
\noindent
 Since $q_j=0$
 if and only if $a_j=0$, the maximization in $C_\circ=\max_{0\le j\le N,\;a_j\ne0}|a_j|/q_j$ ranges only over indices with $q_j>0$, so $C_\circ<\infty$.
 Moreover, under Assumption~\ref{assu1} the functions
 $\phi$, $\psi$, and $\nabla \phi$ for $d=2,3$,
 are bounded measurable on the compact set
$\widebar{\Gamma}_{z_0,t_0}(\C^d)
$; thus $C_{\rm b}(t_0,z_0) <\infty$. 

 \medskip

The cone bounds \eqref{eq:CD1-Cone} and \eqref{eq:CDhigh-Cone} imply that every argument of $\phi,\psi$ (and $\nabla \phi$ for $d=2,3$) appearing in \eqref{eq:Ipartial} lies in $\widebar{\Gamma}_{z_0,t_0}(\C^d)
$. Hence, by \eqref{eq:Ipartial}-\eqref{eq:Io} and
 \eqref{eq:Apartial0}-\eqref{eq:Apartial} we have 
\[
|W^{\rm b}_{(1)}(z,t,\kappa)| \le C_{\rm b}(t_0,z_0)
\quad
\mbox{and}
\quad 
  |W^\circ_{(1)}(t,\kappa)| \le \frac{t_0}{\lambda}C_\circ, 
\]
 $(z,t)\in
\widebar{\Gamma}_{z_0,t_0}(\C^d\times[0,t_0])$,
 $\kappa\in{\cal K}(t,(1))$. 
Inserting these estimates into \eqref{eq:GlobalWeight} yields the bound 
\begin{equation}\label{eq:Hbound}
  |\mathcal{H}(z,t)|
   \le 
  (C_{\rm b}(t_0,z_0))^{|{\cal K}_t^{\rm b} |} 
   \left( \frac{t_0}{\lambda}C_\circ \right)^{|{\cal K}_t^\circ |}
      \prod_{\kappa\in{\cal K}_t }e^{\lambda T_\kappa }
      ,
\end{equation}
 $     (z,t)\in
      \widebar{\Gamma}_{z_0,t_0}(\C^d\times[0,t_0])$,
 $t>0$, 
where we abbreviate $T_\kappa :=T_\kappa^{ t,(1)}$, ${\cal K}_t :={\cal K}(t,(1))$, and ${\cal K}_t^{\rm b} ,{\cal K}_t^\circ $ denote its boundary and interior subsets.

\subsubsection*{A dominating Galton--Watson tree}
\noindent
We introduce an auxiliary tree in which each particle {always} produces $N$ children, preserving the same exponential lifetime,
 with interior, boundary and complete node sets
 $\widetilde{{\cal K}}_{N,t}^\circ $,
 $\widetilde{{\cal K}}_{N,t}^{\rm b} $,
 and $\widetilde{{\cal K}}_{N,t}$, such that 
\[
  {\cal K}_t^\circ \subseteq\widetilde{{\cal K}}_{N,t}^\circ,
  \quad
  {\cal K}_t^{\rm b} \subseteq\widetilde{{\cal K}}_{N,t}^{\rm b} ,
  \quad
      {\cal K}_t \subseteq\widetilde{{\cal K}}_{N,t}. 
\]
\begin{lemma}[Boundary-interior relation in the $N$-ary dominating tree]
 For all $t\ge 0$, we have 
\begin{equation}\label{eq:DomRelation}
  \bigl|\widetilde{{\cal K}}_{N,t}^{\rm b} \bigr|
  =
  1+(N-1)\bigl|\widetilde{{\cal K}}_{N,t}^\circ \bigr|.
\end{equation}
\end{lemma}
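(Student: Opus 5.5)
We argue by a standard counting argument on finite $N$-ary trees, carried out pathwise for each realisation of the lifetimes $(\tau_\kappa)_{\kappa\in\mathbb{K}}$. First we check that $\widetilde{{\cal K}}_{N,t}$ is almost surely finite. Each node has a lifetime $\tau_\kappa\sim\mathrm{Exp}(\lambda)$, and a branching process with exponential lifetimes and bounded offspring number $N$ is almost surely non-explosive: the expected population at time $t$ is at most $e^{\lambda(N-1)t}<\infty$. Hence only finitely many nodes $\kappa'$ have ancestral lifetime sum strictly less than $t$, and the identity is an equality between finite integers. On the null event where the tree is infinite, both sides may be interpreted as $+\infty$, or the event may be discarded.

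On the event that the tree is finite, we first record the structure of $\widetilde{{\cal K}}_{N,t}$ from the definitions of ${\cal K}(t,\kappa)$, ${\cal K}^{\rm b}$ and ${\cal K}^\circ$. A node $\kappa'\in\widetilde{{\cal K}}_{N,t}$ is interior exactly when its ancestral lifetime sum, including $\tau_{\kappa'}$ itself, is $<t$. In that case all $N$ of its children also belong to $\widetilde{{\cal K}}_{N,t}$, since their membership condition excludes their own lifetime and is therefore exactly this inequality. A boundary node has ancestral sum $\ge t$, so none of its children belong to the truncated tree. Conversely, every non-root node of $\widetilde{{\cal K}}_{N,t}$ is a child of an interior node, because its parent's inclusive ancestral sum is $<t$. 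Thus $\widetilde{{\cal K}}_{N,t}$ is a finite rooted tree in which interior nodes are exactly the nodes with children, each having exactly $N$ children, and boundary nodes are exactly the leaves.

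The counting then proceeds in two ways. By the previous paragraph, every node other than the root is the child of a unique interior node, so
\[
|\widetilde{{\cal K}}_{N,t}| = 1 + N\,|\widetilde{{\cal K}}_{N,t}^\circ|.
\]
Since the tree is the disjoint union of its interior and boundary sets, we also have $|\widetilde{{\cal K}}_{N,t}| = |\widetilde{{\cal K}}_{N,t}^\circ|+|\widetilde{{\cal K}}_{N,t}^{\rm b}|$. Subtracting the two expressions gives \eqref{eq:DomRelation}. As a sanity check, when $\tau_{(1)}\ge t$ the root is the single boundary node and there are no interior nodes, which matches $1=1+0$. Alternatively, one could induct on the number of interior nodes: replacing a leaf by an interior node with $N$ leaf children changes the boundary count by $N-1$.

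The only delicate point is the bookkeeping in the definitions: one must confirm that the strict versus non-strict inequalities make interior nodes exactly the ones whose children are all included. The finiteness or non-explosion step is standard, and the counting itself is routine.
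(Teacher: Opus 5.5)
Your proof is correct, but your main argument differs from the paper's. The paper argues by induction on the number of branching events before time $t$, building the tree chronologically: each event turns one boundary node into an interior node and creates $N$ new boundary children, so $|\widetilde{{\cal K}}_{N,t}^{\rm b}|$ rises by $N-1$ while $|\widetilde{{\cal K}}_{N,t}^\circ|$ rises by $1$. This is the alternative you mention in passing.

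You instead work statically on the final truncated tree. You first check from the definitions (strict inequality on the exclusive ancestral sum for membership, inclusive sum for the boundary/interior split) two facts:
\begin{itemize}
\item interior nodes are exactly the nodes with children in the tree, and each has all $N$ children present;
\item every non-root node has a unique interior parent.
\end{itemize}
Double counting then gives $|\widetilde{{\cal K}}_{N,t}|=1+N\,|\widetilde{{\cal K}}_{N,t}^\circ|$, and subtracting $|\widetilde{{\cal K}}_{N,t}|=|\widetilde{{\cal K}}_{N,t}^\circ|+|\widetilde{{\cal K}}_{N,t}^{\rm b}|$ finishes.

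The paper's induction follows the dynamics of the branching process and is very short. However, it tacitly assumes that only finitely many branching events occur before $t$, and that the chronological growth reproduces the truncated tree as defined. Your route makes both points explicit, and also gives the total tree size as a byproduct.

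One small tightening: a finite mean number of particles alive at time $t$ does not by itself bound the number of nodes born before $t$. It is cleaner to note that the $N^n$ nodes of generation $n$ belong to the tree exactly when the sum of $n$ i.i.d.\ $\mathrm{Exp}(\lambda)$ lifetimes is $<t$. Hence $\E\bigl[|\widetilde{{\cal K}}_{N,t}|\bigr]\le\sum_{n\ge0}N^n\,\PP(\mathrm{Poi}(\lambda t)\ge n)\le\sum_{n\ge0}(N\lambda t)^n/n!=e^{N\lambda t}$, which gives almost sure finiteness directly.
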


\begin{proof}
We argue by induction on the number of branching events that occur before the
truncation time $t$ in the tree $\widetilde{{\cal K}}_{N,t}$.

\smallskip
\noindent\textbf{Base case.}
If no branching occurs before time $t$, then the tree consists only of the root,
which is a boundary node. Hence
$|\widetilde{{\cal K}}_{N,t}^{\rm b}|=1$ and
$|\widetilde{{\cal K}}_{N,t}^\circ |=0$, and \eqref{eq:DomRelation} holds.

\smallskip
\noindent\textbf{Induction step.}
Assume that after some number of branching events the relation
$|\widetilde{{\cal K}}_{N,t}^{\rm b}|=1+(N-1)|\widetilde{{\cal K}}_{N,t}^\circ |$
is satisfied, and consider the next branching event. At this event, a single
boundary node becomes an interior node and produces exactly $N$ new children,
all of which are boundary nodes. Therefore, the boundary count becomes 
\[
|\widetilde{{\cal K}}_{N,t}^{\rm b}|_{\rm new}
=
|\widetilde{{\cal K}}_{N,t}^{\rm b}|_{\rm old} + N-1,
\]
while the interior count increases by one, i.e. $
|\widetilde{{\cal K}}_{N,t}^\circ |_{\rm new}
=
|\widetilde{{\cal K}}_{N,t}^\circ |_{\rm old}+1$.
 Using the induction hypothesis, we find 
\begin{align*} 
|\widetilde{{\cal K}}_{N,t}^{\rm b}|_{\rm new}
 & =
\bigl(1+(N-1)|\widetilde{{\cal K}}_{N,t}^\circ |_{\rm old}\bigr)+(N-1)
\\
 & =
1+(N-1)\bigl(|\widetilde{{\cal K}}_{N,t}^\circ |_{\rm old}+1\bigr)
\\
 & =
1+(N-1)|\widetilde{{\cal K}}_{N,t}^\circ |_{\rm new},
\end{align*}
 which completes the induction.
\end{proof}
\noindent
 Next are our main integrability results.
\begin{proposition}
\label{prop:N1Lp}
Let $N=1$,
 $p\geq 1$, and $(z_0,t_0)\in\C^d \times (0,\infty)$. 
 Then, under Assumption~\ref{assu1},
 for every $(z,t)\in\widebar{\Gamma}_{z_0,t_0}(\C^d\times[0,t_0])$
 we have 
$$
 \E\bigl[ |\mathcal{H}(z,t)|^p \bigr] < \infty.
 $$ 
 \end{proposition}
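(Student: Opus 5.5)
For $N=1$ every node has at most one child, so ${\cal K}_t$ is a chain (a single line of descent). The plan is to bound $|\mathcal{H}(z,t)|^p$ pathwise by \eqref{eq:Hbound} and then compute the expectation exactly with a Poisson process.

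First we control the chain. Along the chain, the number of interior nodes $|{\cal K}_t^\circ|$ is at most the number $M_t$ of arrivals before time $t$ of a Poisson process with rate $\lambda$ built from the successive lifetimes $\tau_{(1)},\tau_{(1,1)},\dots$. This holds because the chain may stop earlier if some $J_\kappa=0$. There is also at most one boundary node, so $|{\cal K}_t^{\rm b}|\le 1$.

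Next we control the exponential factor. The truncated lifetimes along the chain sum to at most $t$: interior lifetimes plus the truncated last one give exactly $t$ when the chain reaches the boundary, and less otherwise. Hence
\[
\prod_{\kappa\in{\cal K}_t}e^{\lambda T_\kappa}\le e^{\lambda t}\le e^{\lambda t_0}.
\]
Inserting these into \eqref{eq:Hbound} gives, with $A:=\max(1,C_{\rm b}(t_0,z_0))$ and $B:=\max(1,t_0C_\circ/\lambda)$,
\[
|\mathcal{H}(z,t)|\le A\, e^{\lambda t_0}\, B^{M_t}.
\]

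Finally we take expectations. Raising the bound to the power $p$ and using $M_t\sim$ Poisson$(\lambda t)$,
\[
\E\bigl[|\mathcal{H}(z,t)|^p\bigr]\le A^p e^{p\lambda t_0}\,\E\bigl[B^{pM_t}\bigr]=A^pe^{p\lambda t_0}\exp\bigl(\lambda t(B^p-1)\bigr)<\infty,
\]
uniformly over $(z,t)\in\widebar{\Gamma}_{z_0,t_0}(\C^d\times[0,t_0])$. This uses $C_\circ<\infty$ and $C_{\rm b}(t_0,z_0)<\infty$, which hold under Assumption~\ref{assu1} and \eqref{eq:Apartial0}, together with the cone bounds \eqref{eq:CD1-Cone} and \eqref{eq:CDhigh-Cone}, which place every argument of the data in $\widebar{\Gamma}_{z_0,t_0}(\C^d)$.

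The main obstacle I expect is justifying that the chain is dominated by the full Poisson count. This needs the interior lifetimes to be exactly the Poisson interarrival times. The dominating tree with $N=1$ makes this immediate, via ${\cal K}_t\subseteq\widetilde{{\cal K}}_{1,t}$ and \eqref{eq:DomRelation}, which gives $|\widetilde{{\cal K}}^{\rm b}_{1,t}|=1$. Alternatively, the bound can be obtained by a first-step recursion on $\E|\mathcal H|^p$, which yields a linear Volterra inequality solvable by Gronwall's lemma.
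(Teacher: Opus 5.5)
Your proof is correct and follows the paper's argument essentially verbatim. You bound $|\mathcal{H}(z,t)|$ via \eqref{eq:Hbound}, use $\prod_{\kappa\in{\cal K}_t}e^{\lambda T_\kappa}\le e^{\lambda t}$ together with $|{\cal K}_t^{\rm b}|\le 1$, dominate $|{\cal K}_t^\circ|$ by the Poisson$(\lambda t)$ interior count of the $N=1$ dominating chain, and conclude with the Poisson generating function \eqref{eq:PoissonPGF}, exactly as the paper does (the Gronwall alternative you mention is not needed).
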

\begin{proof}
 When $N=1$, $\widetilde{{\cal K}}_{N,t}$ is a Poisson birth-process
 with $|\widetilde{{\cal K}}_{N,t}^\circ |\sim\text{Poi}(\lambda t)$, whence
\begin{equation}
\label{eq:PoissonPGF}
  \E\!\bigl[s^{|\widetilde{{\cal K}}_{N,t}^\circ |}\bigr]
     =e^{ (s-1)\lambda t},
     \qquad s\ge0.
\end{equation}
 Also, 
 since the lifetimes of all nodes in ${\cal K}_t $ sum up to $\le t$, we have
 \(\prod_{\kappa\in{\cal K}_t }e^{\lambda T_\kappa }\le e^{\lambda t}\).
 Thus, \eqref{eq:Hbound} yields 
\begin{align}
\nonumber    
\E \big[ 
  |\mathcal{H}(z,t)|^p
  \big]
& \leq
  \E \left[ \max (1 , C_{\rm b}(t_0,z_0) )^{p|{\cal K}_t^{\rm b} |} 
   \max \left(1 , \frac{t_0}{\lambda}C_\circ \right)^{p|{\cal K}_t^\circ |}
   \left(
   \prod_{\kappa\in{\cal K}_t }e^{\lambda T_\kappa }
   \right)^p
   \right]
  \\
  \nonumber
  & \leq
  \E \left[ \max ( 1 , C_{\rm b}(t_0,z_0) )^p 
   \max \left(1 , \frac{t_0}{\lambda}C_\circ \right)^{p|\widetilde{{\cal K}}_{N,t}^\circ |}
   e^{p\lambda t} %
    \right]
  \\
\nonumber
    & =
  \max ( 1 , C_{\rm b}(t_0,z_0) )^p  
  e^{p\lambda t} 
  \E \left[ 
  \max \left( 1 , \frac{t_0}{\lambda}C_\circ \right)^{p|\widetilde{{\cal K}}_{N,t}^\circ |}
    \right]
  \\
\nonumber
     & =
 \max ( 1 , C_{\rm b}(t_0,z_0) )^p  
  e^{p\lambda t} 
  \exp \left(
\lambda t \left( 
\max \left( 1 , \frac{t_0}{\lambda} C_\circ \right)^p
-1 \right)\right)
    \\
\nonumber %
     & < \infty. 
\end{align} 
\end{proof} 
\begin{proposition}
\label{thm:Nge2Lp}
 Let $N \geq 2$, $p\geq 1$
 and $(z_0,t_0)\in\C^d \times (0,\infty)$. 
 Under Assumption~\ref{assu1}, if 
    \begin{equation}
      \label{eq:CriticalIneq} 
   \max ( 1 , C_{\rm b}(t_0,z_0) )^{N-1}
   \max \left( 1 , \frac{t_0}{\lambda} C_\circ \right)
    e^{\lambda N t_0} 
    < \frac{1}{(1-e^{-\lambda(N-1)t_0})^{1/p}}, 
\end{equation} 
 then we have
\begin{equation}
\nonumber %
  \E\bigl[ |\mathcal{H}(z,t)|^p \bigr] < \infty,
  \qquad (z,t)\in\widebar{\Gamma}_{z_0,t_0}(\C^d\times[0,t_0]).
\end{equation}
\end{proposition}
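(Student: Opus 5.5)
The plan is to dominate $|\mathcal{H}(z,t)|^p$ by an explicit functional of the $N$-ary dominating tree $\widetilde{{\cal K}}_{N,t}$. That functional depends only on the number $n:=|\widetilde{{\cal K}}_{N,t}^\circ|$ of branching events. We then compute the relevant exponential moment of $n$ exactly from the probability generating function of an $N$-ary Yule process. Throughout, write $B:=\max(1,C_{\rm b}(t_0,z_0))$ and $A_\circ:=\max(1,t_0C_\circ/\lambda)$.

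\textbf{Step 1: pathwise bound.} Start from \eqref{eq:Hbound}. Replace $C_{\rm b}(t_0,z_0)$ and $t_0C_\circ/\lambda$ by $B$ and $A_\circ$, which are both $\ge 1$; the inclusions ${\cal K}_t^{\rm b}\subseteq\widetilde{{\cal K}}_{N,t}^{\rm b}$ and ${\cal K}_t^\circ\subseteq\widetilde{{\cal K}}_{N,t}^\circ$ then give exponents in terms of the dominating tree. For the time factor, use the crude bound $T_\kappa\le t$ for each node, so that
\[
\prod_{\kappa\in{\cal K}_t}e^{\lambda T_\kappa}\le e^{\lambda t|\widetilde{{\cal K}}_{N,t}|}.
\]
By \eqref{eq:DomRelation}, $|\widetilde{{\cal K}}_{N,t}| = |\widetilde{{\cal K}}_{N,t}^\circ|+|\widetilde{{\cal K}}_{N,t}^{\rm b}| = 1+Nn$. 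Combining these bounds yields
\[
|\mathcal{H}(z,t)|^p \le \bigl(B e^{\lambda t}\bigr)^p \bigl(B^{N-1}A_\circ e^{\lambda N t}\bigr)^{pn}.
\]
This is why the factor $e^{\lambda N t_0}$, rather than $e^{\lambda (N-1) t_0}$, appears in \eqref{eq:CriticalIneq}.

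\textbf{Step 2: generating function of $n$.} In $\widetilde{{\cal K}}_{N,t}$ every particle lives an $\mathrm{Exp}(\lambda)$ time and is replaced by $N$ children, so the number of boundary nodes $L_t=1+(N-1)n$ is a pure-birth (Yule-type) process. Its generating function $G_t(s)=\E[s^{L_t}]$ solves the backward Kolmogorov equation $\partial_tG=\lambda(G^N-G)$ with $G_0(s)=s$. This Bernoulli equation is solved explicitly by
\[
G_t(s)=s e^{-\lambda t}\bigl(1-s^{N-1}(1-e^{-\lambda(N-1)t})\bigr)^{-1/(N-1)},
\]
valid as long as $s^{N-1}(1-e^{-\lambda(N-1)t})<1$; beyond that point the expectation is infinite. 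Setting $s^{N-1}=x$, this gives $\E[x^{n}]<\infty$ if and only if $x\,(1-e^{-\lambda(N-1)t})<1$, together with an explicit finite value in that case. Rather than just quoting this formula, I would check it by verifying the ODE and the monotone convergence of truncations. This is the step I expect to require the most care, since the moment is finite only up to an explosion threshold.

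\textbf{Step 3: conclusion.} Apply Step 2 with $x=(B^{N-1}A_\circ e^{\lambda Nt})^p$. Then $\E|\mathcal{H}(z,t)|^p<\infty$ as soon as
\[
\bigl(B^{N-1}A_\circ e^{\lambda N t}\bigr)^p\,(1-e^{-\lambda(N-1)t})<1 .
\]
For $t\le t_0$, the left-hand side is increasing in $t$, and $(1-e^{-\lambda(N-1)t})^{-1/p}$ is decreasing in $t$. Hence \eqref{eq:CriticalIneq}, which is the condition at $t=t_0$, implies the condition above for every $(z,t)\in\widebar{\Gamma}_{z_0,t_0}(\C^d\times[0,t_0])$. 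The remaining ingredients (the bounds on the weights and the cone localisation) were already established before \eqref{eq:Hbound} and are uniform in such $(z,t)$.
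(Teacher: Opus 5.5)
Your proof is correct and follows the paper's argument. Both proofs transfer the pathwise bound \eqref{eq:Hbound} to the $N$-ary dominating tree using the crude estimate $T_\kappa\le t$ and the relation \eqref{eq:DomRelation}, then apply the explicit generating function \eqref{eq:PGFboundary} below its radius of convergence $s_*(t)$. The differences are cosmetic: you parametrize by the interior count $n$ instead of $|\widetilde{{\cal K}}_{N,t}^{\rm b}|=1+(N-1)n$, you derive the generating function from the Bernoulli ODE rather than citing Harris, and you state explicitly the monotonicity in $t$ that reduces the case $t\le t_0$ to \eqref{eq:CriticalIneq}, which the paper leaves implicit.
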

\begin{proof}
\noindent
For $N\geq 2$, the boundary size of the dominating tree has the probability
generating function %
\begin{equation}\label{eq:PGFboundary}
  \E\bigl[s^{|\widetilde{{\cal K}}_{N,t}^{\rm b} |}\bigr]
     = 
     \frac{s e^{-\lambda t}}{\bigl(1-s^{N-1}
       (1-e^{-\lambda(N-1)t})\bigr)^{1/(N-1)}},
       \qquad 
       0\le s<s_* (t),
\end{equation}
 see \cite[Example 13.2]{harris1963}
 or \cite[Example 5 page 109]{athreya}, 
 with radius of convergence
\begin{equation}\label{eq:RadiusConv}
 s_* (t):=\frac{1}{(1-e^{-\lambda(N-1)t})^{1/(N-1)}}. 
\end{equation}
 From \eqref{eq:Hbound} and \eqref{eq:DomRelation}, we have
\begin{align*} 
  & \E\bigl[ |\mathcal{H}(z,t)|^p \bigr]
  \leq
  \E\left[
    \left(
   \max (1 , C_{\rm b}(t_0,z_0) )^{|\widetilde{{\cal K}}_{N,t}^{\rm b} |} 
   \max \left(1 , \frac{t_0}{\lambda}C_\circ \right)^{|\widetilde{{\cal K}}_{N,t}^\circ |}
    \prod_{\kappa\in\widetilde{{\cal K}}_{N,t} }e^{\lambda T_\kappa }
    \right)^p
\right]
  \\
\nonumber
                        & 
 \quad = e^{-\lambda t p /(N-1)}
  \E\left[
    \left(
    \max (1 , C_{\rm b}(t_0,z_0) )^{|\widetilde{{\cal K}}_{N,t}^{\rm b} |} 
    \max \left(1 , \frac{t_0}{\lambda} C_\circ \right)^{
 ( |\widetilde{{\cal K}}_{N,t}^{\rm b} | -1)/(N-1)
}
    e^{\lambda t N |\widetilde{{\cal K}}_{N,t}^{\rm b} |/(N-1)}
    \right)^p
\right]
  \\
\nonumber
    & \quad \leq
\E\left[
    \left(
    \max (1 , C_{\rm b}(t_0,z_0) )
    \max \left(1 , \frac{t_0}{\lambda} C_\circ \right)^{
  1 /(N-1)
}
    e^{\lambda t N /(N-1)}
    \right)^{p|\widetilde{{\cal K}}_{N,t}^{\rm b} |}
\right]
\\
\nonumber
& \quad < \infty, \quad  (z,t)\in\widebar{\Gamma}_{z_0,t_0}(\C^d\times[0,t_0]),
\quad t>0, 
\end{align*} 
provided that
$$
\left(
    \max ( 1 , C_{\rm b}(t_0,z_0) )  
    \max \left(1 , \frac{t_0}{\lambda} C_\circ\right)^{
 1 /(N-1)
}
    e^{\lambda t N /(N-1)}
    \right)^p
    <
 s_* (t) =\frac{1}{(1-e^{-\lambda(N-1)t})^{1/(N-1)}}, 
$$  
 i.e.
 $$
   \max ( 1 , C_{\rm b}(t_0,z_0) )^{N-1}
   \max \left( 1 , \frac{t_0}{\lambda} C_\circ\right)
    e^{\lambda N t} 
    <
\frac{1}{(1-e^{-\lambda(N-1)t})^{1/p}}. 
$$  
\end{proof} 
\noindent
Increasing the time horizon $t_0$ makes the condition \eqref{eq:CriticalIneq} harder to satisfy, as the left-hand side increases through the exponential factor and the possible $t_0$-dependence of $C_{\rm b}(t_0,z_0)$, whereas the right-hand side decreases to $1$ as $t_0$ grows.
 On the other hand, \eqref{eq:CriticalIneq}
    can be replaced by a simpler and stronger condition
    similarly to \eqref{eq:explicit-threshold-1d-0}-\eqref{eq:explicit-threshold-23d-0} above.

\section{Classical solutions}
\label{s6} 
\noindent
In this section, we show that the probabilistic representation %
\eqref{eq:Estimator} can yield not only a mild but also
a {classical} solution to the Cauchy problem \eqref{eq:WaveMain}-\eqref{eq:PolyNonlin}. 

\begin{assumption}\label{assu2}
  Assume that $\psi\in {\cal C}^2 \bigl(
  \widebar{\Gamma}_{z_0,t_0}(\C^d)
  \bigr)$, and 
\begin{enumerate}[i)] %
\item $\phi \in {\cal C}^2 \bigl(
  \widebar{\Gamma}_{z_0,t_0}(\C^d)
  \bigr)$ for $d=1$, 
\item $\phi\in {\cal C}^3 \bigl(
  \widebar{\Gamma}_{z_0,t_0}(\C^d)
  \bigr)$ for $d=2,3$.
\end{enumerate}
\end{assumption}
\noindent
 Assumption~\ref{assu2} is required
 to ensure sufficient differentiability of
 $\mathcal{H}(z,t,\kappa)$
 in \eqref{eq:GlobalWeight}
 for the existence of classical solutions, 
 as shown in the next result.
\begin{theorem}[Existence of classical solutions]
\label{thm:classical}
 Let $N \geq 1$ and $(z_0,t_0)\in\C^d \times (0,\infty)$. 
 Suppose that Assumption~\ref{assu2} holds, and that
 \eqref{jfklfdsa1a} is satisfied if $N\geq 2$. 
 Then, for every $(z,t)\in\widebar{\Gamma}_{z_0,t_0}(\C^d\times[0,t_0])$
 we have 
$
 \E\bigl[ |\mathcal{H}(z,t)| \bigr] < \infty$,
 and the probabilistic representation %
\[
v(z,t) := \E [\mathcal{H}(z,t)],
\quad (z,t) \in \widebar{\Gamma}_{z_0,t_0}(\C^d\times[0,t_0]), 
\]
 satisfies 
\begin{enumerate}[(i)] %
\item \label{it:C2space}
  $v\in {\cal C}^2_z\bigl(
  \widebar{\Gamma}_{z_0,t_0}(\C^d\times[0,t_0])
  \bigr)$;
\item \label{it:C2time}
  $v\in {\cal C}^2_t\bigl(
  \widebar{\Gamma}_{z_0,t_0}(\C^d\times[0,t_0])
  \bigr)$;
\item \label{it:PDE}
      $\partial_{tt}v-c^2\Delta v=f(v)$ on
      $\widebar{\Gamma}_{z_0,t_0}(\C^d\times[0,t_0])$, $t>0$;
\item \label{it:IC}
  for all
  $z \in \widebar{\Gamma}_{z_0,t_0}(\C^d)$,
  $\displaystyle
  \lim_{(z',t)\to (z,0) } v( z' ,t)=\phi (z)$ and
  $\displaystyle
  \lim_{(z',t)\to (z,0) } \partial_t v( z' ,t)=\psi (z)$,
 where $(z',t)\in\widebar{\Gamma}_{z_0,t_0}(\C^d\times[0,t_0])$,
 $t>0$.
\end{enumerate}
\end{theorem}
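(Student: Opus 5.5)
The plan is to combine the integrability already established with a bootstrap on the mild formulation. Integrability $\E[|\mathcal H(z,t)|]<\infty$ follows as in Theorem~\ref{thm:mild-existence}, from Propositions~\ref{prop:N1Lp} and \ref{thm:Nge2Lp} with $p=1$. Condition \eqref{jfklfdsa1a} implies \eqref{eq:CriticalIneq} for $p=1$, since $1/(1-e^{-\lambda(N-1)t_0}) = e^{\lambda(N-1)t_0}/(e^{\lambda(N-1)t_0}-1)$. Theorem~\ref{thm:Mild} then gives that $v$ satisfies the mild identity \eqref{eq:Mild1D}, resp.\ \eqref{qakjfds-1}-\eqref{qakjfds-2}, on the cone $\widebar{\Gamma}_{z_0,t_0}(\C^d\times[0,t_0])$. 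The same domination argument also yields the uniform bound $\sup_{(z,t)}|v(z,t)|\le M<\infty$ on the cone, because the right-hand side of \eqref{eq:Hbound} and the dominating-tree estimate do not depend on $(z,t)$.

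The core of the argument is to show first that $v$ is continuous, and then to upgrade regularity using the linear theory of \S\ref{subsec:LinearTheory}. For continuity, I will use dominated convergence in the random functional. For fixed tree data $(\tau_\kappa,J_\kappa,U_\kappa,\Theta_\kappa)$, the map $(z,t)\mapsto\mathcal H(z,t)$ is continuous off the null event $\{\sum_{\rm AN}\tau=t$ for some node$\}$, since $\phi,\nabla\phi,\psi$ are continuous by Assumption~\ref{assu2}. The dominating variable is the bound \eqref{eq:Hbound}, evaluated on the $N$-ary tree at horizon $t_0$, and this bound is integrable. Hence $v$ is continuous and bounded, so $g(x,s):=f(v(x,s))$ is continuous and bounded on the cone.

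Next comes the regularity bootstrap. Writing $v=v_{\rm lin}+v_{\rm Duh}$, the linear part built from $\phi,\psi$ is ${\cal C}^2$ in $(z,t)$ under Assumption~\ref{assu2}, by the classical d'Alembert, Poisson and Kirchhoff results. Here one derivative is lost in the kernel for $d=2,3$, which is why $\phi\in{\cal C}^3$ is required there. For the Duhamel part with continuous $g$, I will change variables $y\mapsto y'=z+cy$ in the Duhamel integral to move $z$ into the kernel argument; for $d=1$ this shows the term is ${\cal C}^1$. Then $g=f(v)$ is ${\cal C}^1$, and a second pass gives ${\cal C}^2$ for $v_{\rm Duh}$. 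The classical statement in \S\ref{subsec:LinearTheory} requires $g\in{\cal C}^{1+\lfloor d/2\rfloor}$-type regularity. For $d=2,3$ I therefore expect to iterate once more, using the polar forms \eqref{eq:Mild2Dpolar}-\eqref{eq:Mild3Dpolar}, where $z$ and $t$ appear only through $f(v(z+cs\hat e,t-s))$ with a smooth weight. Differentiating under the integral sign in those forms transfers regularity from $v$ to the integral, and the regularity of the linear part caps the gain. Alternatively, derivatives of $v$ can be represented directly by differentiating $\mathcal H$: each derivative falls on one boundary weight, giving $\nabla\phi,\nabla^2\phi,\nabla^3\phi,\nabla\psi,\nabla^2\psi$ evaluated on the cone. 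These are bounded by Assumption~\ref{assu2}, so the same dominating bound with a modified $C_{\rm b}$ is multiplied by a factor $|{\cal K}^{\rm b}|^2$. That factor is harmless because \eqref{jfklfdsa1a} is a strict inequality and gives slack inside the radius of convergence \eqref{eq:RadiusConv}. I plan to use this second route for spatial derivatives, since it handles all $d$ uniformly. Time derivatives then follow from the mild identity, differentiating in $t$, where the boundary terms at $s=t$ vanish or are controlled.

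Once $v\in{\cal C}^2$ in both variables, (iii) follows from the Duhamel principle: the Evans formulas in \S\ref{subsec:LinearTheory} solve $\partial_{tt}u-c^2\Delta u=g$ classically with $g=f(v)$, and $v$ equals that solution by the mild identity. The substitution $x\mapsto z$ complex is justified because everything is evaluated along $z+cy$, $y\in\R^d$. Item (iv) follows by letting $t\to0$ in the mild identity: the Duhamel term is $O(t^2)$ with its $t$-derivative $O(t)$, and the linear part attains $\phi,\psi$ continuously. The main obstacle I expect is the uniform domination for the differentiated functional, that is, showing that derivatives can be interchanged with $\E$ on the whole closed cone with the polynomial-in-tree-size factors. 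The difficulty is especially acute near the cone boundary and near $t=0$ for $d=2$, where the kernel is singular.
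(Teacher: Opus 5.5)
Your proposal is correct and follows essentially the paper's route. First, ${\cal C}^2_z$ regularity comes from differentiating $\mathcal H$ under $\E$: at most two boundary factors carry higher derivatives, so the bound is $C|\widetilde{{\cal K}}_{N,t}^{\rm b}|^2$ times the original exponential bound with $C_{\rm b}(t_0,z_0)$ unchanged, and it is integrable because $\E\bigl[|\widetilde{{\cal K}}_{N,t}^{\rm b}|^2 s^{|\widetilde{{\cal K}}_{N,t}^{\rm b}|}\bigr]=s^2G_t''(s)+sG_t'(s)$ has the same radius of convergence $s_*(t)$. Next, ${\cal C}^2_t$ follows from the Duhamel term written so that $t$ enters only through $(t-s)$ and the spatial argument $z+c(t-s)y$, and Evans' theorems then give (iii)--(iv). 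Your extra joint-continuity step (a.s.\ continuity of $\mathcal H$ with a $t$-uniform dominating variable from the $N$-ary tree at horizon $t_0$) usefully fills in something the paper leaves implicit. The bootstrap and the $d=2$ kernel-singularity concern are unnecessary, since the sampled marks absorb the kernel and all weights stay bounded on the cone.
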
 

\begin{proof}
\newcommand{\proofpart}[1]{\par\medskip\noindent\textbf{#1}\par\smallskip}

\proofpart{(i) ${\cal C}^2$ regularity of $v$ in space.}
\noindent
\textbf{Step 1 (Pointwise ${\cal C}^2_z$ regularity for $\mathcal{H}(z,t)$).}
In the product expression \eqref{eq:GlobalWeight}, the $z$-dependence
appears only in the boundary factors $W^{\rm b}_{(1)}$, and each such factor
is ${\cal C}^2$ in $z$ by \eqref{eq:Ipartial} and Assumption~\ref{assu2},
 hence $\mathcal{H}(\cdot,t)$ is ${\cal C}^2$ in $z$, %
 $\PP$-{\em a.s.} %

\noindent\textbf{Step 2 (Dominated convergence).}
 In the product expression 
\eqref{eq:GlobalWeight}, the variable $z$ appears only through the boundary
terms $W^{\rm b} (z,t,\kappa )$ in \eqref{eq:Ipartial}, which are
 ${\cal C}^2$ in $z$ by Assumption~\ref{assu2}. 
Hence, the second derivative $\partial_{z_kz_l}{\cal H}$,
$1\le k,l\le d$, contains at most $|\widetilde{{\cal K}}_{N,t}^{\rm b} |^2$
 summands, up to a multiplicative constant. 
 In addition, similarly to the derivation of the bound \eqref{eq:Hbound},
using the cone bounds \eqref{eq:CD1-Cone} and \eqref{eq:CDhigh-Cone}
together with \eqref{eq:Ipartial}-\eqref{eq:Io} and
 \eqref{eq:Apartial0}-\eqref{eq:Apartial}, there exists
 a constant $C=C(\phi,\psi,z_0,t_0,d)>0$ such that
\begin{align*} 
   \sum_{1\le k,l\le d}\bigl|\partial_{z_k z_l}\mathcal{H}(z,t)\bigr|
    \le 
  C\,|\widetilde{{\cal K}}_{N,t}^{\rm b} |^2
  \max (1 , C_{\rm b}(t_0,z_0) )^{|\widetilde{{\cal K}}_{N,t}^{\rm b} |}
  \max \left(1 , \frac{t_0}{\lambda}C_\circ\right)^{|\widetilde{{\cal K}}_{N,t}^\circ |}
  \prod_{\kappa\in\widetilde{{\cal K}}_{N,t} }e^{\lambda T_\kappa }, 
\end{align*} 
 $(z,t)\in\widebar{\Gamma}_{z_0,t_0}(\C^d\times[0,t_0])$. 
 For $N=1$, we have
   $|\widetilde{{\cal K}}_{N,t}^{\rm b} |=1$,
 $|\widetilde{{\cal K}}_{N,t}^\circ|\sim\mathrm{Poi}(\lambda t)$, and
 \begin{align*} 
   \sum_{1\le k,l\le d}\bigl|\partial_{z_k z_l}\mathcal{H}(z,t)\bigr|
   \leq 
  C 
  \max (1 , C_{\rm b}(t_0,z_0) )
  \max \left(1 , \frac{t_0}{\lambda}C_\circ\right)^{|\widetilde{{\cal K}}_{N,t}^\circ |}
 e^{\lambda t_0}, 
\end{align*} 
 so that
  finiteness of expectations follows from \eqref{eq:PoissonPGF}.
  For $N\ge2$, 
  using \eqref{eq:DomRelation} we have  
\begin{align*} 
 &  \sum_{1\le k,l\le d}\bigl|\partial_{z_k z_l}\mathcal{H}(z,t)\bigr|
  \\
  & \qquad  
   \le 
  C\,|\widetilde{{\cal K}}_{N,t}^{\rm b} |^2
  \max (1 , C_{\rm b}(t_0,z_0) )^{|\widetilde{{\cal K}}_{N,t}^{\rm b} |}
  \max \left(1 , \frac{t_0}{\lambda}C_\circ\right)^{|\widetilde{{\cal K}}_{N,t}^{\rm b}|/(N - 1)}
  e^{\lambda t_0
   N |\widetilde{{\cal K}}_{N,t}^{\rm b}|/(N - 1)}
, 
\end{align*} 
 $(z,t)\in\widebar{\Gamma}_{z_0,t_0}(\C^d\times[0,t_0])$.
 Next, differentiating
  the PGF
  $G_t(s):=\E \big[ s^{|\widetilde{{\cal K}}_{N,t}^{\rm b} |} \big]$ 
  in \eqref{eq:PGFboundary}
  with respect to $s$ gives 
\begin{align} 
  \label{fklsfa}
  \E\!\big[|\widetilde{{\cal K}}_{N,t}^{\rm b} |^2 s^{|\widetilde{{\cal K}}_{N,t}^{\rm b} |}\big]
& = s^2 G_t''(s) + s G_t'(s)
\\
\nonumber
& = e^{-\lambda t}\big(
  s\,B^{-\alpha-1}
+ (N+s)\,r\,s^{N}\,B^{-\alpha-1}
+ N\,r^2 \,s^{2N-1}\,B^{-\alpha-2}
\big),
\end{align}
 with the abbreviations
 $r:=1-e^{-\lambda(N-1)t}$, $B:=1-r s^{N-1}$, $\alpha:=1/(N-1)$,
 and the same radius of convergence $s_* (t)$
 in \eqref{eq:RadiusConv} as $G_t(s)$. 
 Choosing
\[
s : = \max (1 , C_{\rm b}(t_0,z_0) )
      \max \left(1 , \frac{t_0}{\lambda} C_\circ\right)^{1/(N-1)}
        e^{\lambda t_0 N/(N-1)},
\]
 in \eqref{fklsfa}, the bound
  \eqref{jfklfdsa1a}, i.e. \eqref{eq:CriticalIneq} applied with $p=1$,
  yields $s<s_* (t_0)\leq s_* (t)$ when $t\le t_0$, 
  and therefore the integrability of
  $$
  |\widetilde{{\cal K}}_{N,t}^{\rm b} |^2
  \max (1 , C_{\rm b}(t_0,z_0) )^{|\widetilde{{\cal K}}_{N,t}^{\rm b} |}
  \max \left(1 , \frac{t_0}{\lambda}C_\circ\right)^{|\widetilde{{\cal K}}_{N,t}^{\rm b}|/(N - 1)}
  e^{\lambda t_0
   N |\widetilde{{\cal K}}_{N,t}^{\rm b}|/(N - 1)}
  $$
 for all $t\in [0,t_0]$.
 Hence, by the differentiability lemma, see, e.g.
 Theorem~12.5 in \cite{Schilling2017}, we obtain 
\[
  \partial_{z_k z_l}v(z,t)=\E\!\left[\partial_{z_k z_l}\mathcal{H}(z,t)\right],
  \quad
   k,l = 1,\ldots , d, 
\]
 which proves \eqref{it:C2space}. 

\proofpart{(ii) ${\cal C}^2$ regularity of $v(t,z)$ in time.}
\noindent
By Theorem~\ref{thm:mild-existence}, $v$ satisfies the mild identities
\eqref{eq:Mild1D} for $d=1$, resp.\ \eqref{eq:Mild2Dpolar}-\eqref{eq:Mild3Dpolar} for $d=2,3$. The terms involving the initial data are ${\cal C}^2$ in $t$ (see Section~\ref{subsec:LinearTheory} and \cite[§2.4, Theorems~1-3]{Eva10}). It remains to treat the nonlinear Duhamel term.
 Writing these terms on a fixed domain
 using the change of variables $y\mapsto (t-s)y$ gives:
\[
\begin{cases}
\displaystyle
\frac{1}{2}\int_0^t (t-s)\!\int_{-1}^1 f (
v(z+c(t-s)y,s) ) \,dy\,ds, & d=1,\\[12pt]
\displaystyle
\frac{1}{2\pi}\int_0^t (t-s)\!\int_{B_2(0,1)} 
\frac{f(v(z+c(t-s)y,s))}{\sqrt{1-|y|^2}}\,dy\,ds, & d=2,\\[12pt]
\displaystyle
\frac{1}{4\pi}\int_0^t (t-s)\!\int_{S_2 (0,1)}
f(v(z+c(t-s)y,s))\,\sigma^{(2)}_1 (dy)\,ds, & d=3.
\end{cases}
\]
Differentiating in $t$ brings two types of contributions: explicit derivatives of $(t-s)$ and chain-rule derivatives through $z+c(t-s) y $.
The latter use spatial derivatives of $v$. Since \eqref{it:C2space} already gives $v\in {\cal C}^2_z$ on the cone and $f$ is a polynomial, the integrands of the first and second $t$-derivatives are bounded on the fixed domains by functions integrable in $s$. %
 Hence the dominated convergence theorem legitimizes differentiating under the integral twice for $t$, yielding $v\in {\cal C}^2_t$ on $\widebar{\Gamma}_{z_0,t_0}(\C^d\times[0,t_0])$, and proving~\eqref{it:C2time}.

\proofpart{(iii) PDE identity and (iv) Initial conditions.}
\noindent
 We note that $v$ is ${\cal C}^2$ in $z$ by \eqref{it:C2space} and in $t$ by \eqref{it:C2time}, hence 
in Section~\ref{subsec:LinearTheory}
the source term
 $g = f \circ v$
 is ${\cal C}^2$. 
 In addition,
 by Theorem~\ref{thm:mild-existence}, 
 $v$ satisfies the
 mild identities
\eqref{eq:Mild1D} for $d=1$, resp.\ \eqref{eq:Mild2Dpolar}-\eqref{eq:Mild3Dpolar} for $d=2,3$. 
 Hence, 
 \eqref{it:PDE} and 
 \eqref{it:IC} hold 
 from
 \cite[§2.4, Theorems~1~and~4]{Eva10},
 \cite[§2.4, Theorems~3~and~4]{Eva10},
 \cite[§2.4, Theorems~2~and~4]{Eva10},
 respectively for $d=1,2,3$.
\end{proof}
\noindent

\section{Numerical experiments} 
\label{s7} 
\noindent
 All Monte Carlo experiments in this section
  are using $10^7$ random
  samples.
\subsection{Nonlinear Klein-Gordon equation} %
\noindent
 Letting $c:=1$, we consider the defocusing cubic Klein-Gordon equation
 \begin{equation}
   \label{ddefocusing} 
    \partial_{tt}u - \Delta u + u + u^3 = 0 
\end{equation} 
 see, e.g., \cite{KillipStovallVisan2012},
 with sine initial condition 
\[
    \phi(z)=\sin(\pi z_1)\sin(\pi z_2), \quad
    \psi(z)=-\sin(\pi z_1)\sin(\pi z_2)  
\] 
in dimension $d=2$. 
Figures~\ref{fig:Stability-1} and \ref{fig:Stability-1-2}
plot the Monte Carlo estimate \eqref{eq:Estimator}
 of the solution of \eqref{ddefocusing} 
 on $[0,1]\times [0,1]$
 and the difference 
 between Monte Carlo and finite differences 
 estimates
 obtained using the command {\tt NDSolveValue} in Mathematica. 

\vspace{-0.6cm}

\begin{figure}[H]
\centering %
\begin{subfigure}{0.45\textwidth}
  \includegraphics[width=\textwidth]{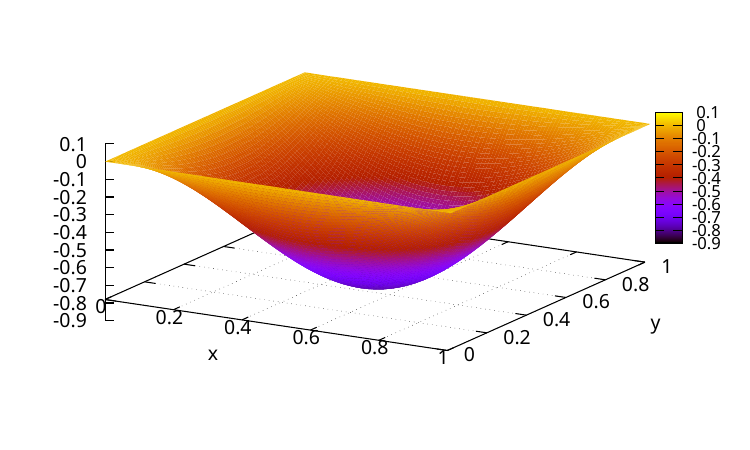}
  \vspace{-1.2cm}
\subcaption{Monte Carlo estimate \eqref{eq:Estimator}.} 
\label{fig:Stability-1}
\end{subfigure}
\hskip0.5cm
\begin{subfigure}{0.45\textwidth}
  \includegraphics[width=\textwidth]{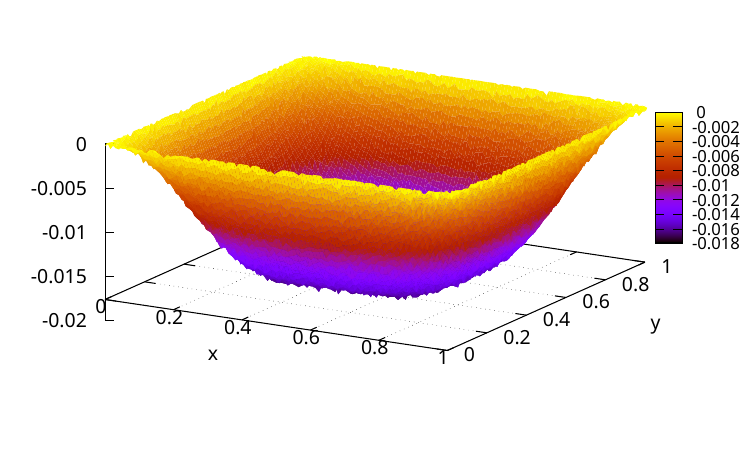}
  \vspace{-1.2cm}
\subcaption{Discrepancy with finite differences.}
\label{fig:Stability-1-2}
\end{subfigure}
\caption{Numerical solutions of \eqref{ddefocusing} with $t=0.5$ and $d=2$.} 
\end{figure}

\vspace{-0.4cm}

\subsection{Defocusing elliptic problem} 
\noindent 
Letting $c:=\Im$, we consider the nonlinear
elliptic equation 
\begin{equation}
\label{djklsd1} 
 \partial_{tt}u+\Delta u + u - u^3 = 0, 
\end{equation}
which can be obtained from the focusing
cubic Klein-Gordon equation
\begin{equation}
\nonumber 
    \partial_{tt}u - \Delta u + u - u^3 = 0 
\end{equation} 
 see e.g. \cite{NakanishiSchlag2011}, 
 by the complex transformation $z\mapsto i z$.
\subsubsection*{Travelling-wave initial data}
\noindent 
 We consider the equation \eqref{djklsd1} with the initial data and
 corresponding traveling wave solutions listed in Table~\ref{tab:IC}.

 \begin{table}[H]
\centering\small
\begin{tabular}{|c|c|c|c|}
  \hline
  \rule{0pt}{10pt}$d$ & $\phi(z)$ & $\psi(z)$ & Closed-form solution \\[1pt]
  \hline
  \rule{0pt}{20pt}1 &
    $\displaystyle\tanh \Bigl(\frac{i}{\sqrt6} z\Bigr)$ &
    $\displaystyle-\sqrt{\frac23} 
    \text{sech}^2 \Bigl(\frac{i}{\sqrt6} z\Bigr)$ &
    $\displaystyle\tanh \Bigl(\frac{iz-2t}{\sqrt6} \Bigr)$ \\[10pt]
  \hline
  \rule{0pt}{20pt}2 &
    $\displaystyle\tanh \Bigl(i\frac{z_1+z_2}{2\sqrt3}\Bigr)$ &
    $\displaystyle-\sqrt{\frac23} 
    \text{sech}^2 \Bigl(i\frac{z_1+z_2}{2\sqrt3}\Bigr)$ &
    $\displaystyle\tanh \Bigl(\frac{1}{\sqrt6} \Bigl(i\frac{z_1+z_2}{\sqrt2}-2t\Bigr)\Bigr)$ \\[10pt]
  \hline
  \rule{0pt}{20pt}3 &
    ~$\displaystyle\tanh \Bigl(i\frac{z_1+z_2+z_3}{3\sqrt2}\Bigr)$~ &
    ~$\displaystyle-\sqrt{\frac23} 
    \text{sech}^2 \Bigl(i\frac{z_1+z_2+z_3}{3\sqrt2}\Bigr)$~ &
    ~$\displaystyle\tanh \Bigl(\frac{1}{\sqrt6} \Bigl(i\frac{z_1+z_2+z_3}{\sqrt3}-2t\Bigr)\Bigr)$~ \\[10pt]
  \hline
\end{tabular}
\caption{Closed-form solutions of \eqref{djklsd1}.}
\label{tab:IC}
\end{table}

\vskip-0.6cm

\noindent 
 Figures~\ref{f2-1-0}-\ref{f2-3-0}
 are plotted as functions of time $t\in [0,1.8]$
 for $z=(-1,\ldots ,-1)$,
 and show the Monte Carlo estimates in addition
 to the explosion of standard deviations (SD).

\vskip-0.1cm

\begin{figure}[H]
\centering %
\begin{subfigure}{0.45\textwidth}
    \includegraphics[width=1.\textwidth]
      {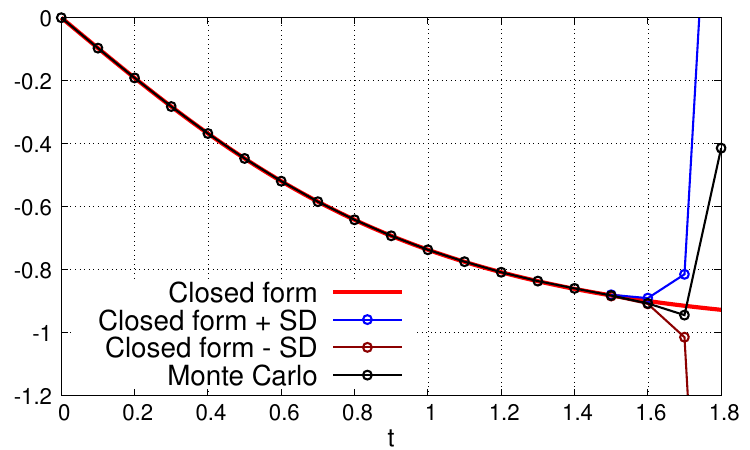} %
\subcaption{Real part.} 
\end{subfigure}
\begin{subfigure}{0.45\textwidth}
    \includegraphics[width=1.\textwidth]
      {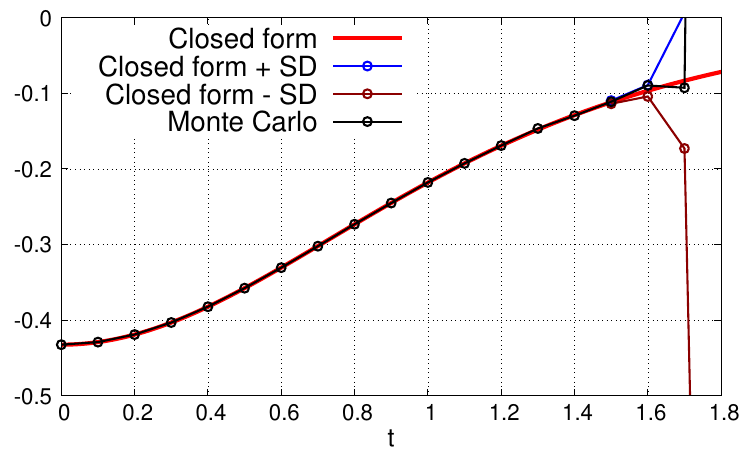} %
\subcaption{Imaginary part.} 
\end{subfigure}
\caption{Monte Carlo estimate \eqref{eq:Estimator} in dimension $d=1$.} 
\label{f2-1-0} 
\end{figure}

\begin{figure}[H]
\centering %
\begin{subfigure}{0.45\textwidth}
         \includegraphics[width=1.\textwidth]
      {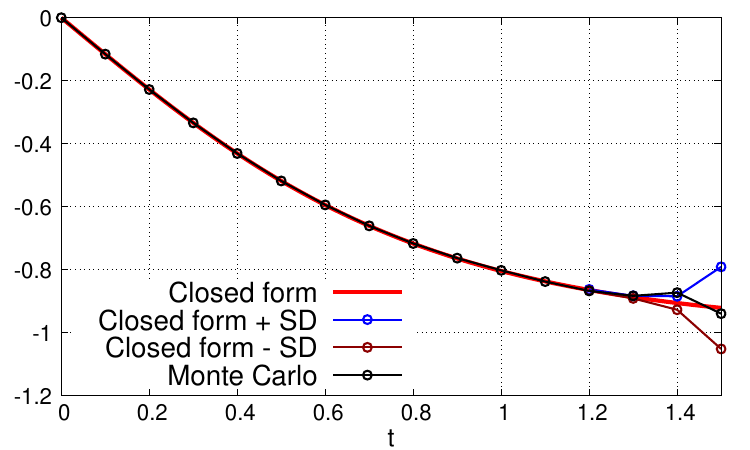} %
\subcaption{Real part.} 
\end{subfigure}
\begin{subfigure}{0.45\textwidth}
    \includegraphics[width=1.\textwidth]
      {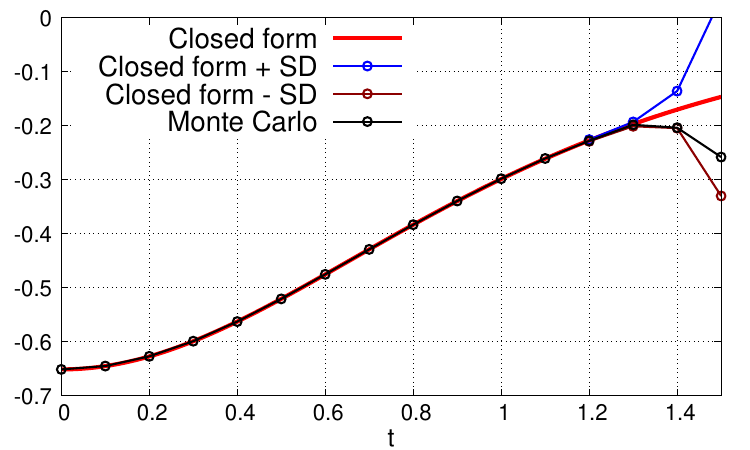} %
\subcaption{Imaginary part.} 
\end{subfigure}
\caption{Monte Carlo estimate \eqref{eq:Estimator} in dimension $d=2$.} 
\end{figure}

\begin{figure}[H]
\centering %
\begin{subfigure}{0.45\textwidth}
         \includegraphics[width=1.\textwidth]
      {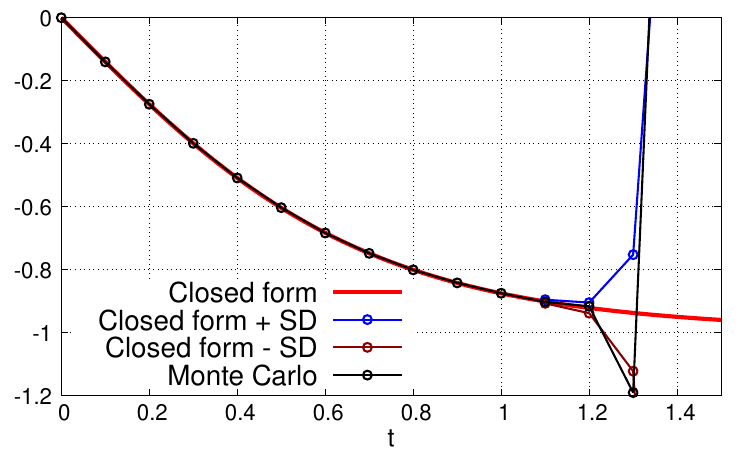} %
\subcaption{Real part.} 
\end{subfigure}
\begin{subfigure}{0.45\textwidth}
    \includegraphics[width=1.\textwidth]
      {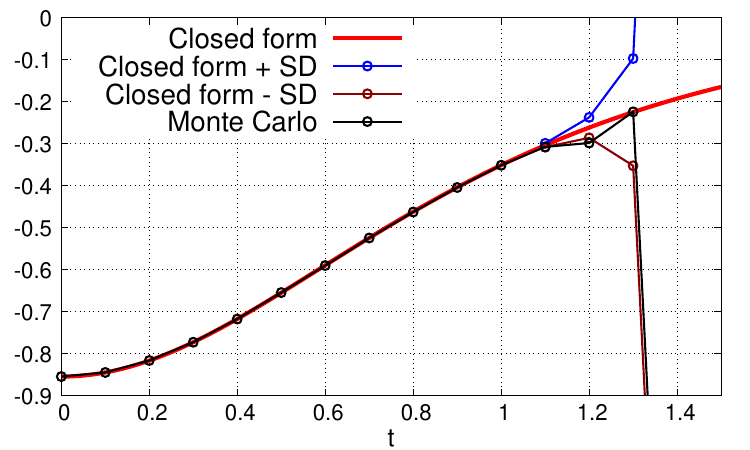} %
\subcaption{Imaginary part.} 
\end{subfigure}
\caption{Monte Carlo estimate \eqref{eq:Estimator} in dimension $d=3$.} 
\label{f2-3-0} 
\end{figure}

\vspace{-0.4cm}

\noindent
Figure~\ref{fig:runtimes} presents a comparison of runtimes in seconds for Figures~\ref{f2-1-0}-\ref{f2-3-0}.

\begin{figure}[H]
\centering %
\includegraphics[width=0.9\textwidth]
      {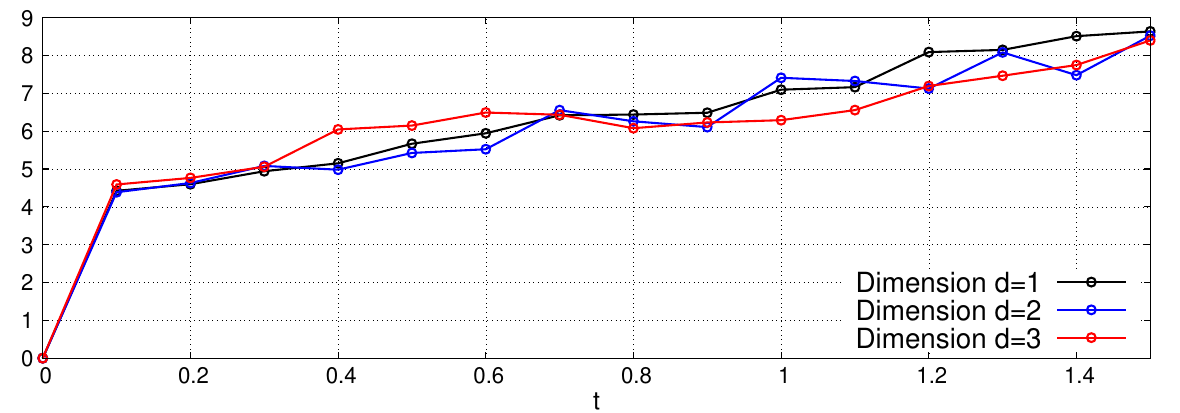}
\vspace{-0.2cm}
\caption{Comparison of runtimes (s) for Figures~\ref{f2-1-0}-\ref{f2-3-0}.} 
\label{fig:runtimes}
\end{figure}

\vspace{-0.4cm}

\noindent
 Computer codes and numerical results for
 Figures~\ref{f2-1-0}-\ref{f2-3-0} are available at\\
\centerline{\url{https://github.com/chanjuanyin/Nonlinear_Wave_simulations}.} 

\subsubsection*{Sine initial data} 

\noindent
We consider the elliptic equation \eqref{djklsd1} in dimension $d=1$
with the sine initial data
$$
\phi (x ) = \sin (\pi x) \quad \mbox{and}
\quad \psi (x) = - \sin (\pi x), \quad x\in [0,1].
$$
Figure~\ref{fig:Stability-1-2-0} 
compares Monte Carlo and explicit finite difference estimates of 
$u(x,t)$ with $x\in [0,1]$ and $t\in [0,0.4]$ 
using the command {\tt NDSolveValue} in Mathematica.
 We observe that the Monte Carlo and finite differences match until time $t=0.35$, beyond which the finite difference scheme becomes unstable.

\begin{figure}[H]
\centering %
\includegraphics[width=0.9\textwidth]
      {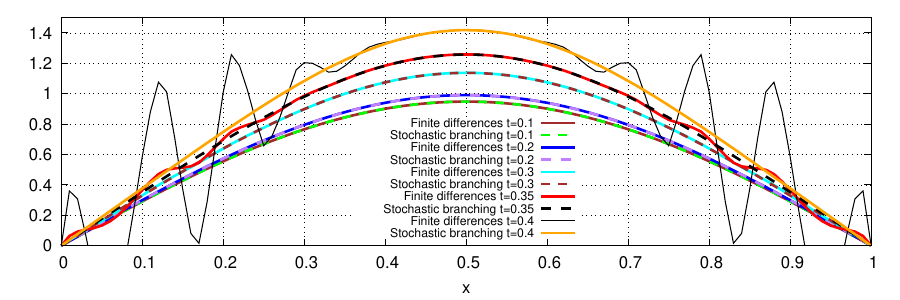}
\vspace{-0.4cm}
\caption{Comparisons with explicit finite differences.} 
\label{fig:Stability-1-2-0}
\end{figure}

\vspace{-0.4cm}

\noindent 
Figure~\ref{fig:Stability-2-2-0} 
compares Monte Carlo and implicit finite difference estimates of 
$u(x,t)$ with $x\in [0,1]$ and $t\in [0,0.7]$
using the command {\tt NDSolveValue} 
with the {\tt LinearlyImplicitEuler} option 
in Mathematica. 

\begin{figure}[H]
\centering %
\includegraphics[width=0.9\textwidth]
      {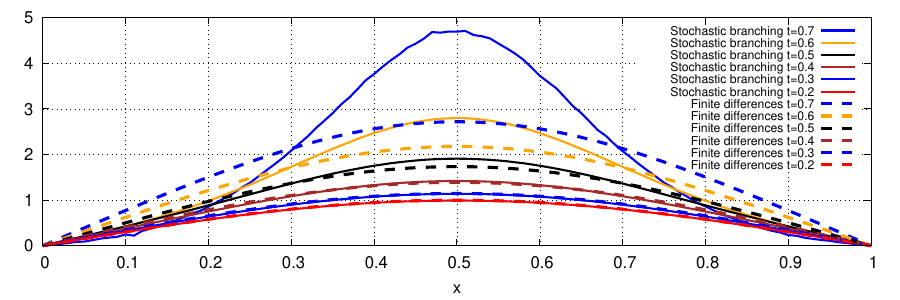}
\vspace{-0.4cm}
\caption{Comparisons with implicit finite differences.} 
\label{fig:Stability-2-2-0}
\end{figure}

\vspace{-0.4cm}

\noindent
As expected, the implicit scheme is more stable but exhibits
a loss of accuracy compared to the explicit scheme due
to loss of energy conservation. 

\subsection{Focusing elliptic problem} 
\noindent
 We consider the 
 focusing equation 
 \begin{equation}
   \label{focusing} 
    \partial_{tt}u + \Delta u + u + u^3 = 0, 
\end{equation}
  with $(x,y)\in [0,1]\times [0,1]$ and $c=\Im$,
   which
 can be obtained from the defocusing
 cubic Klein-Gordon equation \eqref{ddefocusing} 
 by the complex transformation $z\mapsto i z$,
 and can be viewed as an ill-posed problem. 
 Figures~\ref{fig:Stability-2} and \ref{fig:Stability-2-2}
 compare the Monte Carlo method to the finite differences estimates of the solution
$u(x,y,0.5)$ obtained using the command {\tt NDSolveValue} in Mathematica.  
 We note that the finite difference method is clearly
 unstable for this type of problem.
 
\vspace{-0.6cm}

\begin{figure}[H]
\centering %
\begin{subfigure}{0.49\textwidth}
\includegraphics[width=\textwidth]{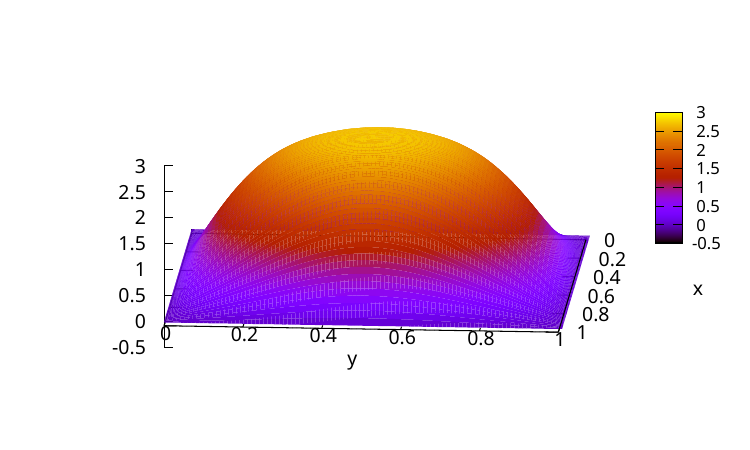}
\vspace{-1.2cm}
\subcaption{Monte Carlo estimate \eqref{eq:Estimator}.} %
  \label{fig:Stability-2}
\end{subfigure}
\begin{subfigure}{0.49\textwidth}
\includegraphics[width=\textwidth]{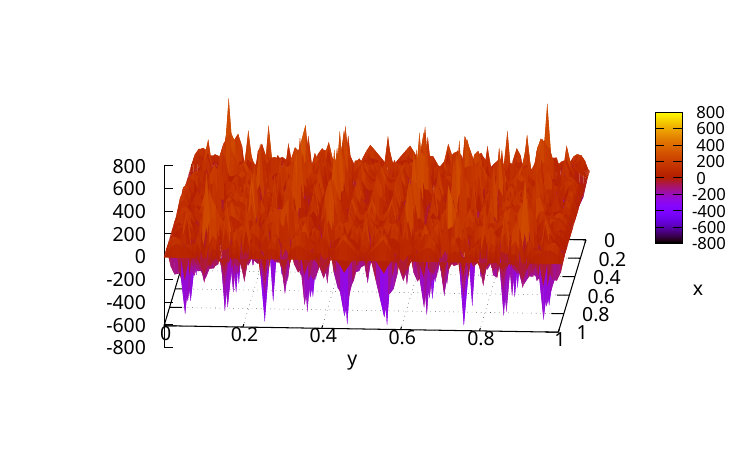}
\vspace{-1.2cm}
\subcaption{Finite differences estimation.}
\label{fig:Stability-2-2}
\end{subfigure}
\caption{Numerical solutions of \eqref{focusing} with $t=0.5$ and $d=2$.} 
\end{figure}

\vspace{-0.4cm}

\noindent
Additional numerical experiments confirming the above observations
are available at

\centerline{\url{https://github.com/nprivaul/wave_equation}} 

\noindent
using the deep Galerkin and finite difference methods
in Python. 

\paragraph{Acknowledgement.}
 We thank Qiao Huang (Southern University of Science and Technology, China) 
 for useful suggestions.

\footnotesize

\def\cprime{$'$} \def\polhk#1{\setbox0=\hbox{#1}{\ooalign{\hidewidth
  \lower1.5ex\hbox{`}\hidewidth\crcr\unhbox0}}}
  \def\polhk#1{\setbox0=\hbox{#1}{\ooalign{\hidewidth
  \lower1.5ex\hbox{`}\hidewidth\crcr\unhbox0}}} \def\cprime{$'$}


\begin{thebibliography}{DMTW19}

\bibitem[AN72]{athreya}
K.B. Athreya and P.E. Ney.
\newblock {\em Branching processes}, volume 196 of {\em Die Grundlehren der
  mathematischen Wissenschaften}.
\newblock Springer-Verlag, New York-Heidelberg, 1972.

\bibitem[AT92]{AgemiTakamura1992}
R.~Agemi and H.~Takamura.
\newblock The lifespan of classical solutions to nonlinear wave equations in
  two space dimensions.
\newblock {\em Hokkaido Math. J.}, 21:517--542, 1992.

\bibitem[BM10]{bakhtin}
Y.~Bakhtin and C.~Mueller.
\newblock Solutions of semilinear wave equation via stochastic cascades.
\newblock {\em Commun. Stoch. Anal.}, 4(3):425--431, 2010.

\bibitem[CDM06]{cartier}
P.~Cartier and C.~DeWitt-Morette.
\newblock {\em Functional integration: action and symmetries.}
\newblock Camb. Monogr. Math. Phys. Cambridge: Cambridge University Press,
  2006.

\bibitem[CLM08]{chakraborty}
S.~Chakraborty and J.A. L\'{o}pez-Mimbela.
\newblock Nonexplosion of a class of semilinear equations via branching
  particle representations.
\newblock {\em Adv. Appl. Probab.}, 40:250--272, 2008.

\bibitem[DMT08]{dalang}
R.C. Dalang, C.~Mueller, and R.~Tribe.
\newblock A {F}eynman--{K}ac-type formula for the deterministic and stochastic
  wave equations and other {P}.{D}.{E}.'s.
\newblock {\em Trans. Amer. Math. Soc.}, 360(9):4681--4703, 2008.

\bibitem[DMTW19]{waymire}
R.~Dascaliuc, N.~Michalowski, E.~Thomann, and E.C. Waymire.
\newblock Complex {B}urgers equation: a probabilistic perspective.
\newblock In {\em Sojourns in probability theory and statistical physics. {I}.
  {S}pin glasses and statistical mechanics, a {F}estschrift for {C}harles {M}.
  {N}ewman}, volume 298 of {\em Springer Proc. Math. Stat.}, pages 138--170.
  Springer, Singapore, 2019.

\bibitem[Eva10]{Eva10}
L.~C. Evans.
\newblock {\em Partial Differential Equations}, volume~19 of {\em Grad. Stud.
  Math.}
\newblock Amer. Math. Soc., Providence, RI, 2nd edition, 2010.

\bibitem[Gol51]{goldstein1951random}
S.~Goldstein.
\newblock On diffusion by discontinuous movements and on the telegraph
  equation.
\newblock {\em Q. J. Mech. Appl. Math.}, 4:129--156, 1951.

\bibitem[Har63]{harris1963}
T.E. Harris.
\newblock {\em The theory of branching processes}, volume 119 of {\em Die
  Grundlehren der mathematischen Wissenschaften}.
\newblock Springer-Verlag, Berlin; Prentice Hall, Inc., Englewood Cliffs, NJ,
  1963.

\bibitem[HLT21]{labordere2}
P.~Henry-Labord\`ere and N.~Touzi.
\newblock Branching diffusion representation for nonlinear {C}auchy problems
  and {M}onte {C}arlo approximation.
\newblock {\em Ann. Appl. Probab.}, 31(5):2350--2375, 2021.

\bibitem[HLTT14]{laborderespa}
P.~Henry-Labord\`ere, X.~Tan, and N.~Touzi.
\newblock A numerical algorithm for a class of {BSDE}s via the branching
  process.
\newblock {\em Stochastic Process. Appl.}, 124(2):1112--1140, 2014.

\bibitem[INW69]{inw}
N.~Ikeda, M.~Nagasawa, and S.~Watanabe.
\newblock Branching {M}arkov processes {I}, {II}, {III}.
\newblock {\em J. Math. Kyoto Univ.}, 8-9:233--278, 365--410, 95--160,
  1968-1969.

\bibitem[Kac74]{kac1974stochastic}
M.~Kac.
\newblock A stochastic model related to the telegrapher's equation.
\newblock {\em Rocky Mt. J. Math.}, 4:497--509, 1974.

\bibitem[KSV12]{KillipStovallVisan2012}
R.~Killip, B.~Stovall, and M.~Visan.
\newblock Scattering for the cubic {K}lein--{G}ordon equation in two space
  dimensions.
\newblock {\em Trans. Amer. Math. Soc.}, 364(3):1571--1631, 2012.

\bibitem[LS97]{sznitman}
Y.~{Le Jan} and A.~S. Sznitman.
\newblock Stochastic cascades and $3$-dimensional {N}avier-{S}tokes equations.
\newblock {\em Probab. Theory Relat. Fields}, 109(3):343--366, 1997.

\bibitem[LZ93]{zhouyi1993}
T.T. Li and Y.~Zhou.
\newblock Life-span of classical solutions to nonlinear wave equations in
  two-space-dimensions. {II}.
\newblock {\em J. Partial Diff. Eq.}, 6(1):17--38, 1993.

\bibitem[McK75]{hpmckean}
H.P. McKean.
\newblock Application of {B}rownian motion to the equation of
  {K}olmogorov-{P}etrovskii-{P}iskunov.
\newblock {\em Comm. Pure Appl. Math.}, 28(3):323--331, 1975.

\bibitem[NPP23]{penent2022fully}
J.Y. Nguwi, G.~Penent, and N.~Privault.
\newblock A fully nonlinear {F}eynman--{K}ac formula with derivatives of
  arbitrary orders.
\newblock {\em J. Evol. Eq.}, 23:Paper No. 22, 29pp., 2023.

\bibitem[NS11]{NakanishiSchlag2011}
K.~Nakanishi and W.~Schlag.
\newblock Global dynamics above the ground state energy for the focusing
  nonlinear {K}lein--{G}ordon equation.
\newblock {\em J. Differential Equations}, 250(5):2299--2333, 2011.

\bibitem[Ram06]{ramirez}
J.M. Ramirez.
\newblock Multiplicative cascades applied to {PDE}s (two numerical examples).
\newblock {\em J. Comput. Phys.}, 214(1):122--136, 2006.

\bibitem[Sch17]{Schilling2017}
R.L. Schilling.
\newblock {\em Measures, Integrals and Martingales}.
\newblock Cambridge University Press, 2nd edition, 2017.

\bibitem[Sko64]{skorohodbranching}
A.V. Skorokhod.
\newblock Branching diffusion processes.
\newblock {\em Teor. Verojatnost. i. Primenen.}, 9:492--497, 1964.

\bibitem[TW11]{TakamuraWakasa2011}
H.~Takamura and K.~Wakasa.
\newblock The sharp upper bound of the lifespan of solutions to critical
  semilinear wave equations in high dimensions.
\newblock {\em J. Differential Equations}, 251(4--5):1157--1171, 2011.

\bibitem[Zho92]{zhouyi1992}
Y.~Zhou.
\newblock Blow up of classical solutions to {$\square u=|u|^{1+\alpha}$} in
  three space dimensions.
\newblock {\em J. Partial Diff. Eq.}, 5(3):21--32, 1992.

\end{thebibliography}
\end{document}